\newtheorem{same}{This should never appear}[section]
\newtheorem{defin}[same]{Definition}
\newtheorem{remark}[same]{Remark}
\newtheorem{thm}[same]{Theorem}
\newtheorem{example}[same]{Example}
\newtheorem{lem}[same]{Lemma}
\newtheorem{fact}[same]{Fact}
\newtheorem{question}[same]{Question}
\newtheorem{cor}[same]{Corollary}
\newbox\noforkbox \newdimen\forklinewidth
\noforkbox\hbox{\lower 2pt\box1\lower 2pt\box0\relax}
\def\unionstick{\mathop{\copy\noforkbox}\limits}
\def\nonfork_#1{\unionstick_{\textstyle #1}}
\newbox\doesforkbox
\doesforkbox\hbox{\lower 2pt\box1 \lower 2pt\box2\lower2pt\box0\relax}
\def\nunionstick{\mathop{\copy\doesforkbox}\limits}
\def\fork_#1{\nunionstick_{\textstyle #1}}
\newcommand{\LS}{\operatorname{LS}}
\newcommand{\ba}{\mathbf{a}}
\newcommand{\bb}{\mathbf{b}}
\newcommand{\K}{\mathcal{K}}
\newcommand{\leap}[1]{\le_{#1}}
\newcommand{\lea}{\leap{\K}}
\newcommand{\bx}{\mathbf{x}}
\newcommand{\by}{\mathbf{y}}
\newcommand{\bz}{\mathbf{z}}
\newcommand{\rest}{\upharpoonright}
\newcommand{\Ll}{\mathbb{L}}
\newcommand{\Lstruct}[1]{\Ll^{#1\text{-struct}}}
\newcommand{\Qstruct}{Q^{\text{struct}}}
\newcommand{\Mod}{\operatorname{Mod}}
\newcommand{\ran}{\operatorname{ran}}
\newcommand{\Ii}{\mathbb{I}}
\newcommand{\seq}[1]{\langle #1 \rangle}
\newcommand{\bL}{\mathbb{L}}
\newcommand{\cC}{\mathcal{C}}
\newcommand{\cD}{\mathcal{D}}
\newcommand{\cF}{\mathcal{F}}
\newcommand{\cL}{\mathcal{L}}
\newcommand{\comment}[1]{}
\newcommand{\ccl}{{\it c}\ell}
\newcommand{\bA}{\mathbf{A}}
\newcommand{\fct}[2]{{}^{#1}#2}
\newcommand{\Dd}{\mathbf{D}}
\newcommand{\PC}{\operatorname{PC}}
\title{Structural logic and abstract elementary classes with intersections}
\author{Will Boney}
\email{wboney@math.harvard.edu}
\urladdr{http://math.harvard.edu/\textasciitilde wboney/}
\address{Department of Mathematics, Harvard University, Cambridge, Massachusetts, USA}
\thanks{This material is based upon work done while
the first author was supported by the National Science Foundation under Grant No.\ DMS-1402191.}
\author{Sebastien Vasey}
\email{sebv@math.harvard.edu}
\urladdr{http://math.harvard.edu/\textasciitilde sebv/}
\address{Department of Mathematics, Harvard University, Cambridge, Massachusetts, USA}
\date{\today\\
  AMS 2010 Subject Classification: Primary 03C48. Secondary: 03B60, 03C80, 03C95}
\keywords{Abstract elementary classes; Intersections; Universal classes; Structural logic}
\begin{document}

\begin{abstract}
  We give a syntactic characterization of abstract elementary classes (AECs) closed under intersections using a new logic with a quantifier for isomorphism types that we call structural logic: we prove that AECs with intersections correspond to classes of models of a universal theory in structural logic. This generalizes Tarski's syntactic characterization of universal classes. As a corollary, we obtain that any AEC closed under intersections with countable Löwenheim-Skolem number is axiomatizable in $\Ll_{\infty, \omega} (Q)$, where $Q$ is the quantifier ``there exists uncountably many''.
\end{abstract}

\maketitle

\tableofcontents

\section{Introduction}

\subsection{Background and motivation}

Shelah's abstract elementary classes (AECs) \cite{sh88, baldwinbook09, shelahaecbook, shelahaecbook2} are a semantic framework to study the model theory of classes that are not necessarily axiomatized by an $\Ll_{\omega, \omega}$-theory. Roughly speaking (see Definition \ref{aec-def}), an AEC is a pair $(K, \lea)$ satisfying some of the category-theoretic properties of $(\Mod (T), \preceq)$, for $T$ an $\Ll_{\omega, \omega}$-theory. This encompasses classes of models of an $\Ll_{\infty, \omega}$ sentence (i.e.\ infinite conjunctions and disjunctions are allowed), and even $\Ll_{\infty, \omega} (\seq{Q_{\lambda_i}}_{i < \alpha})$ theories, where $Q_{\lambda_i}$ is the quantifier ``there exists $\lambda_i$-many''.

Since the axioms of AECs do not contain any axiomatizability requirement, it is not clear whether there is a natural logic whose class of models are exactly AECs. More precisely, one can ask whether there is a natural abstract logic (in the sense of Barwise, see the survey \cite{model-theoretic-logics}) so that classes of models of theories in that logic are AECs and any AEC is axiomatized by a theory in that logic.

An example of the kind of theorem one may expect is Tarski's characterization of universal classes. Tarski showed \cite{tarski-th-models-i} that classes of structures in a finite relational vocabulary which are closed under isomorphism, substructures, and union of chains (according to the substructure relation) are exactly the classes of models of a universal $\Ll_{\omega, \omega}$ theory. The proof of Tarski's result generalizes to non-finite vocabularies as follows:

\begin{defin}
  $K$ is a \emph{universal class} if it is a class of structures in a fixed vocabulary that is closed under isomorphisms, substructures, and unions of chains (according to the substructure relation).
\end{defin}

\begin{fact}[Tarski's presentation theorem, \cite{tarski-th-models-i}]\label{tarski-fact}
  Let $K$ be a class of structures in a fixed vocabulary. The following are equivalent:

  \begin{enumerate}
  \item $K$ is a universal class.
  \item $K$ is the class of models of a universal $\Ll_{\infty, \omega}$theory.
  \end{enumerate}
\end{fact}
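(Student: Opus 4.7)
The plan is to prove both directions, with the bulk of the work in $(1) \Rightarrow (2)$. The direction $(2) \Rightarrow (1)$ is a routine preservation argument: universal $\Ll_{\infty,\omega}$-sentences are closed under substructures (the matrix is quantifier-free, so the same witness tuple works) and under unions of chains (any finite tuple in $\bigcup_i M_i$ already lies in some $M_{i_0}$, where the sentence must hold by hypothesis).

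For the hard direction, my first step is a reduction lemma: if $K$ is closed under isomorphism, substructures, and unions of chains, then $N \in K$ if and only if every finitely generated substructure of $N$ lies in $K$. The forward direction is immediate from closure under substructure. For the converse, I would prove by induction on a cardinal $\kappa$ that any structure $M$ whose finitely generated substructures all lie in $K$, and which admits a generating set of size at most $\kappa$, itself belongs to $K$. The finite case holds by hypothesis; for infinite $\kappa$, enumerate a generating set as $\{g_\alpha : \alpha < \kappa\}$ and form the chain $M_\alpha := \langle \{g_\beta : \beta < \alpha\}\rangle$. Each $M_\alpha$ is generated by fewer than $\kappa$ elements, and every finitely generated substructure of $M_\alpha$ is one of $M$, so by the inductive hypothesis $M_\alpha \in K$. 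Closure under chains then yields $M = \bigcup_{\alpha < \kappa} M_\alpha \in K$.

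The second step builds the theory. For each $n < \omega$, let $\Gamma_n$ be the collection of complete quantifier-free $\Ll_{\infty,\omega}$-types in the variables $\bar{x} = (x_1, \ldots, x_n)$ realized by some $n$-tuple in some member of $K$. Since each such type is a subset of the set of quantifier-free formulas in $\bar{x}$ (a set whose size is bounded by the vocabulary), $\Gamma_n$ is itself a set. Writing each $p \in \Gamma_n$ as the quantifier-free $\Ll_{\infty,\omega}$-formula $p(\bar{x}) := \bigwedge_{\phi \in p} \phi(\bar{x})$, let
\[
T := \left\{\forall \bar{x}\, \bigvee_{p \in \Gamma_n} p(\bar{x}) \,:\, n < \omega \right\},
\]
a universal $\Ll_{\infty,\omega}$-theory.

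Finally, I verify $\Mod(T) = K$. Every $M \in K$ satisfies $T$ by construction. Conversely, suppose $N \models T$ and let $\bar{a}$ be a finite tuple from $N$; its quantifier-free type equals some $p \in \Gamma_n$, so there is $\bar{b}$ in some $M \in K$ with the same quantifier-free type as $\bar{a}$. The complete quantifier-free type of a tuple determines the generated substructure up to isomorphism (it specifies which terms are equal and which atomic facts hold among them), so $\langle \bar{a}\rangle_N \cong \langle \bar{b}\rangle_M$, and closure under substructure and isomorphism places $\langle \bar{a}\rangle_N$ in $K$. The reduction lemma then gives $N \in K$. The main obstacle is the reduction lemma: closure under chains is strictly weaker than closure under arbitrary directed unions, so one must carefully organize an arbitrary generating set into a well-ordered ascending chain of pieces each of strictly smaller generating cardinality — which is precisely what the cardinal induction accomplishes.
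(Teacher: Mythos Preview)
The paper does not prove this statement: it is recorded as a \emph{Fact} with a citation to Tarski's original paper, so there is no ``paper's own proof'' to compare against directly. Your argument is the standard one and is correct. One small point of phrasing: when you speak of ``complete quantifier-free $\Ll_{\infty,\omega}$-types,'' you really mean complete \emph{atomic} types (equivalently, complete quantifier-free first-order types); as stated, the collection of quantifier-free $\Ll_{\infty,\omega}$-formulas in $\bar x$ is a proper class, but your parenthetical clarification (``which terms are equal and which atomic facts hold'') shows you have the right object in mind, and the bound on $|\Gamma_n|$ then goes through.

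It is worth noting that the paper does, indirectly, recover Tarski's theorem by a different route: Remark~\ref{tarski-gen-rmk} explains that when the general axiomatization of Theorem~\ref{aec-to-logic} is specialized to a universal class, the closure predicates $E_n$ become term-definable (so no functorial expansion is needed) and each structural quantifier $\Qstruct_{M_2,M_1}$ collapses to the conjunction of a quantifier-free type, yielding exactly a universal $\Ll_{\infty,\omega}$ theory. That approach trades your elementary chain-induction for the general AEC machinery; yours is self-contained and more direct, while the paper's route exhibits Tarski's result as the degenerate case of the structural-logic presentation theorem.
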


Here, a universal sentence is one of the form $\forall x_0 \ldots x_{n - 1} \psi$ with $\psi \in \bL_{\infty, \omega}$ quantifier-free.  Note that this is not the only definition of universal sentences in the literature; see \cite[Remark 2.5]{categ-universal-2-selecta} for further discussion. 
 Universal classes are a special type of abstract elementary classes. In a sense, their complexity is quite low and indeed several powerful theorems can be proven there (see e.g.\ \cite[Chapter V]{shelahaecbook2} and the second author's ZFC proof of the eventual categoricity conjecture there \cite{ap-universal-apal, categ-universal-2-selecta}).

A more general kind of AECs are AECs with intersections. They were introduced by Baldwin and Shelah \cite[Definition 1.2]{non-locality}. They are defined as the AECs in which the intersection of any set of $\K$-substructures of a fixed model $N$ is again a $\K$-substructure, see Definition \ref{intersec-def}. In universal classes, this property follows from closure under substructure so any universal class is an AEC with intersections. Being closed under intersections does not imply that the classes are easy to analyze, e.g.\, \cite{hs-example, bk-hs, non-locality, lc-tame-pams} provide examples of AECs closed under intersections that fail to be tame. Nevertheless, AECs with intersections are still less complex than general AECs. For example, the second author has shown that Shelah's eventual categoricity conjecture holds there assuming a large cardinal axiom \cite[Theorem 1.7]{ap-universal-apal}, whereas the conjecture is still open for general AECs.

\subsection{Tarski's presentation theorem for AECs with intersections}

In the present paper, we generalize Tarski's presentation theorem to AECs with intersections as follows: we introduce a new logic, $\Lstruct{\kappa}_{\infty, \omega}$, which is essentially $\Ll_{\infty, \omega}$ expanded by what we call structural quantifiers, 
and show that AECs with intersections are (essentially) exactly the class of models of a particular kind of theory--what we call a $\forall\Qstruct$-theory (Definition \ref{aq-def})
--in the logic $\Lstruct{\kappa}_{\infty, \omega}$. More precisely (Corollary \ref{charact-cor}), any $\forall\Qstruct$-theory in $\Lstruct{\kappa}_{\infty, \omega}$ gives rise to an AEC with intersections and, for any AEC with intersections, there is an expansion of its vocabulary with countably-many relation symbols so that the resulting class is axiomatized by a $\forall\Qstruct$-theory in $\Lstruct{\kappa}_{\infty, \omega}$.  Moreover the expansion is functorial (i.e.\ it induces an isomorphism of concrete category, see Definition \ref{funct-def}).

The idea of the proof is to code the isomorphism types of the set $\ccl^N (\ba)$, where $\ccl^N (\ba)$ denotes the intersections of all the $\K$-substructures of $N$ containing the finite sequences $\ba$. The logic $\Lstruct{\kappa}_{\infty, \omega}$ is expanded by a family of generalized quantifiers in the sense of Mostowski and Lindstr\"{o}m \cite{mostowski-quantifiers, lindstrom-quantifiers}. We also add quantifiers such as $\Qstruct_{(M_2, M_1)} x y \phi (x) \psi (y)$, which asks whether the solution sets $(A, B)$ of $(\phi, \psi)$ are isomorphic to $(M_2, M_1)$ (where of course the two isomorphisms must agree). This is crucial to code the ordering of the AEC. Our characterization also generalizes Kirby's result on the definability of Zilber's quasiminimal classes \cite[\S5]{quasimin}. Indeed it is easy to see (Remark \ref{basic-rmk}) that $\Lstruct{\aleph_1}_{\omega_1,\omega}$ is just $\Ll_{\omega_1, \omega} (Q)$ (where $Q$ is the quantifier ``there exists uncountably many'') and quasiminimal classes are in particular AECs with intersections, see \cite{quasimin-aec-afml}. Thus we obtain that any AEC with intersections and countable Löwenheim-Skolem-Tarski number is axiomatizable in $\Ll_{\infty, \omega} (Q)$, see Corollary \ref{l-q-cor}.

An immediate conclusion is that any AEC which admits intersections, has Löwenheim-Skolem-Tarski number $\aleph_0$, and has countably-many countable models, has a Borel functorial expansion (in the sense that its restriction to $\aleph_0$ can be coded by a Borel set of reals, see Corollary \ref{borel-cor}). This is further evidence for the assertion that AECs with intersections have low complexity and paves the way for the use of descriptive set-theoretic tools in the study of these classes (see \cite[Chapter I]{shelahaecbook}, \cite{almost-galois-stable, baldwin-larson-iterated}).

\subsection{Other approaches}

Rabin \cite{rabin-inter} syntactically characterizes $\Ll_{\omega, \omega}$ theories whose class of models is closed under intersections. The characterization is (provably) much more complicated than that of universal classes. This paper shows that by changing the logic we can achieve a much easier characterization.  In a recent preprint \cite{multipres-v4-toappear}, Lieberman, Rosick\'y, and the second author have shown that any AEC with intersections is a locally $\aleph_0$-polypresentable category. In particular, it is $\aleph_0$-accessible, and this implies that it is equivalent (as a category) to a class of models of an $\Ll_{\infty, \omega}$ sentence. We discuss these approaches in greater detail in Section \ref{eva-sec}.

\comment{It is however false that an AEC with intersections is ``directly'' axiomatizable in $\Ll_{\infty, \omega}$: consider the AEC whose models are equivalence relations all of whose classes are countably infinite, ordered by the relation ``equivalence classes do not grow''. Thus asking for a direct axiomatizability, or axiomatizability up to functorial expansion (as in this paper) is stronger. Note that for any AEC $\K$ there is a functorial expansion which is $\Ll_{\infty, \LS (\K)^+}$-axiomatizable \cite[3.9]{baldwin-boney} but it is not known whether any AEC is directly $\Ll_{\infty, \LS (\K)^+}$-axiomatizable. There are several other facts and open questions about axiomatizability of AECs up to equivalence of categories. See for example \cite[\S4]{multipres-v4-toappear}, \cite{beke-rosicky}, \cite{lieberman-categ}, or \cite{makkai-pare}.}

\subsection{Notation}

We denote the universe of a $\tau$-structure $M$ by $|M|$ and its cardinality by $\|M\|$. We use $\kappa^-$ to denote the predecessor of a cardinal $\kappa$: it is $\kappa_0$ if $\kappa = \kappa_0^+$ and $\kappa$ otherwise.

\subsection{Acknowledgments}

We thank John T.\ Baldwin, Marcos Mazari-Armida, and the referee for feedback which helped improve the presentation of this paper.

\section{Structural quantifiers}\label{struct-sec}

We define a new logic, $\Lstruct{\kappa}$. It consists of $\Ll_{\omega, \omega}$ with a family of quantifiers $\Qstruct_{M, \bA}$, which generalize the quantifier $Q^{I(M)}$ from \cite[\S4]{makowsky-shelah-stavi}. Compared to $Q^{I(M)}$, we allow multiple formulas and $\tau_0$-structures, with $\tau_0$ a sub-vocabulary of $\tau$.

\begin{defin}
  Let $\tau$ be a vocabulary and $\kappa$ be an infinite cardinal. We define the logic $\Lstruct{\kappa} (\tau)$ as follows:

\begin{enumerate}
	\item $\Lstruct{\kappa}$ is the smallest set closed under the following:
	\begin{enumerate}
		\item Atomic formulas;
		\item Negation;
		\item Binary conjunction and disjunction;
		\item Existential and universal quantification; and
		\item If $n < \omega$, $\phi (x, \bz)$ and $\seq{\psi_i (y_i, \bz) : i < n}$ are formulas in $\Lstruct{\kappa}(\tau)$, $\tau_0$ is a sub-vocabulary of $\tau$, $M$ is a $\tau_0$-structure with universe an ordinal strictly less than $\kappa$ and $\bA := \seq{A_i : i < n}$ are subsets of $|M|$, then letting $\by := \seq{y_i : i < n}$:
                  
		  $$\Qstruct_{M, \bA} x \by \phi (x, \bz) \seq{\psi_i(y_i, \bz) : i < n} \in \Lstruct{\kappa}(\tau)$$
                  
	\end{enumerate}
	\item Satisfaction $\vDash_{\Lstruct{\kappa}}$ is defined inductively as follows (we omit the subscript since it will always be clear from context):
	\begin{enumerate}
		\item As usual for the first-order operations.
		\item If $N$ is a $\tau$-structure, $n < \omega$, $\phi (x, \bz)$ and $\seq{\psi_i (y_i, \bz) : i < n}$ are formulas in $\Lstruct{\kappa}(\tau)$, $\tau_0$ is a sub-vocabulary of $\tau$, $M$ is a $\tau_0$-structure with universe an ordinal strictly less than $\kappa$ and $\bA := \seq{A_i : i < n}$ are subsets of $|M|$, then letting $\by := \seq{y_i : i < n}$, $N \models \Qstruct_{M, \bA} x \by \phi (x, \bb) \seq{\psi_i(y_i, \bb) : i < n}$ if and only if:

                  \begin{enumerate}
                  \item For all $i < n$, $N \models \forall x \left( \psi_i (x, \bb) \rightarrow \phi (x, \bb)\right)$.
                  \item There is a $\tau_0$-substructure $N_0$ of $N$ with universe $\phi (N, \bb)$, and there is an isomorphism $f$ from $N_0$ onto $M$ such that for all $i < n$, $f[\psi_i (N, \bb)] = A_i$.
                  \end{enumerate}
	\end{enumerate}
\end{enumerate}

We define variants such as $\Lstruct{\kappa}_{\infty, \omega}$ in the natural way.
\end{defin}

Several notational remarks are in order: first, the restriction of the universe of $M$ to be an ordinal is here to avoid having a logic with class-many formulas. We will often use $\Qstruct_{M, \bA}$ when the universe of $M$ is \emph{not} an ordinal, and this should just be replaced by $\Qstruct_{M', \bA'}$, where $M', \bA'$ are renaming of $M$ and $\bA$ so that the universe of $M'$ is an ordinal. 

Second, the main cases for us in the formula $\Qstruct_{M, \bA} x \by \phi (x, \bz) \seq{\psi_i(y_i, \bz) : i < n}$ are when $n = 0$ or $n = 1$. In the former case, we will just write $\Qstruct_M x \phi(x, \bz)$ and in the latter $\Qstruct_{M, A} x y \phi(x, \bz) \psi(y, \bz)$. In most cases, $A$ will induce a substructure of $M$.  

In order to analyze a class of models axiomatized by structural quantifiers, we use a strong notion of elementarity, $\preceq_\cF^*$, which ensures that classes of models of an $\Lstruct{\kappa}$-theory are closed under unions of $\preceq_{\cF}^*$-increasing chains.

\begin{defin} \
\begin{enumerate}
	\item Given a language $\tau$ and a logic $\cL$, a \emph{fragment} $\cF$ is a set $\cF \subseteq \cL(\tau)$ closed under subformula and containing all atomic formulas.
	\item Given a fragment $\cF \subseteq \Lstruct{\kappa}_{\infty, \omega}(\tau)$ and two $\tau$-structures $N_1 \subseteq N_2$, $N_1 \preceq_\cF N_2$ if and only if for every $\phi(\bx) \in \cF$ and $\ba \in N_1$,
	$$N_1\vDash \phi(\ba) \iff N_2 \vDash \phi(\ba)$$
	\item Given a fragment $\cF \subseteq \Lstruct{\kappa}_{\infty, \omega}(\tau)$ and two $\tau$-structures $N_1 \subseteq N_2$, $N_1 \preceq^*_\cF N_2$ if and only if:
	\begin{enumerate}
	\item $N_1 \preceq_\cF N_2$; and
        \item For any formula $\Qstruct_{M, \bA} x \by \phi (x, \bz) \seq{\phi_i (y_i, \bz) : i < n}$ in $\cF$, if $|\phi (N_1, \bb)| < \kappa$, then $\phi (N_1, \bb) = \phi (N_2, \bb)$ and $\psi_i (N_1, \bb) = \psi_i (N_2, \bb)$ for all $i < n$.
        \end{enumerate}
      \item Given a fragment $\cF \subseteq \Lstruct{\kappa}_{\infty, \omega}$ and a theory $T \subseteq \cF$, we let $\Mod_{\cF} (T)$ be the class $(\Mod (T), \preceq_{\cF}^*)$.
        
\end{enumerate}
\end{defin}

Note that we do \emph{not} assume that a fragment must be closed under the finitary operations, just that it contains the atomic formulas and is closed under subformulas.

We also have the following basic facts:

\begin{remark}\label{basic-rmk} \
  \begin{enumerate}
  \item $\Lstruct{\aleph_0}_{\lambda, \kappa}$ is equivalent to $\Ll_{\lambda, \kappa}$.
  \item For any ordinal $\alpha$, we can express $\exists^{\geq \aleph_\alpha} x \phi(x, \bz)$ in $\Lstruct{\aleph_\alpha}_{(|\alpha| + \aleph_0)^+, \omega}$ by:

    $$
    \bigwedge_{n < \omega} \neg \Qstruct_n x \phi (x, \bz) \land \bigwedge_{\beta < \alpha} \neg \Qstruct_{\aleph_\beta} x \phi(x, \bz)
    $$

    In particular ``there exists uncountably many'' can be expressed in $\Lstruct{\aleph_1}_{\omega_1, \omega}$.
  \item \label{lw1wq} In fact, $\Lstruct{\aleph_1}_{\lambda^+, \omega}$ is equivalent to $\Ll_{\lambda^+, \omega} (Q)$, where $Q$ is the quantifier ``there exists uncountably many''. We have just established the right to left direction. For the other direction, use suitably relativized Scott sentences. Specifically, let us explain how to replace an $\Lstruct{\aleph_1}_{\lambda^+, \omega}$ formula of the form $\Qstruct_{M, \bA} x \by \phi (x, \bz) \seq{\psi_i (y_i, \bz) : i < n}$ by an equivalent formula in $\Ll_{\lambda^+,\omega} (Q)$. Suppose that $\tau$ is the vocabulary of the formula and $\tau_0$ the vocabulary of $M$. Let $\tau_0' = \tau_0 \cup \{P_i : i < n\}$, where the $P_i$'s are unary predicates not appearing in $\tau$. Let $M_{\bA}$ be the $\tau_0'$ structure $(M, A_i)_{i < n}$, where we have written $\bA = \seq{A_i : i < n}$. Let $\rho$ be a Scott sentence for $M_{\bA}$. Let $\rho'$ be $\rho$ relativized to $\phi (\cdot, \bz)$, with for all $i < n$ all occurrences of $P_i (y)$ replaced by $\psi_i (y, \bz)$. Then $\Qstruct_{M, \bA} x \by \phi (x, \bz) \seq{\psi_i (y_i, \bz) : i < n}$ is equivalent to $\bigwedge_{i < n} \forall x (\psi_i (x, \bz) \to \phi (x, \bz)) \land \neg Q x \phi (x, \bz) \land \rho'$. 
  \item The notion of elementarity $\preceq^*_{\cF}$ introduced for structural quantifiers coincides with the notion of elementarity used to study $\bL(Q)$, see \cite[Definition 5.1.2]{baldwinbook09}.
  \item An $\Lstruct{\kappa} (\tau)$-formula of the form $\Qstruct_{M, \bA} \phi (x, \bz) \seq{\psi_i (y_i, \bz) : i < n}$, with $M$ a $\tau_0$-structure, $\tau_0 \subseteq \tau$ is equivalent to a disjunction of formulas of the form $\Qstruct_{M', \bA} \phi (x, \bz) \seq{\psi_i (y_i, \bz) : i < n}$ , where $M'$ ranges over all isomorphism types of $\tau$-expansion of $M$. Thus we can avoid the use of a sub-vocabulary $\tau_0$ but have to consider potentially longer (of length up to $2^{|\tau|}$) disjunctions.
  \end{enumerate}
\end{remark}

We now show that $\Mod_{\cF} (T)$ defined above is an abstract elementary class (AEC). For the convenience of the reader, we repeat the definition of an AEC here.

\begin{defin}[\cite{sh88}]\label{aec-def}
  An \emph{abstract elementary class (AEC)} is a pair $\K = (K, \lea)$ satisfying the following properties:

  \begin{enumerate}
  \item $K$ is a class of structures in a fixed vocabulary $\tau = \tau (\K)$ and $\lea$ is a partial order, $M \lea N$ implies $M \subseteq N$, and $K$ and $\lea$ are both closed under isomorphisms.
  \item \underline{Coherence:} if $M_0, M_1, M_2 \in \K$, $M_0 \subseteq M_1 \lea M_2$ and $M_0 \lea M_2$, then $M_0 \lea M_1$.
  \item \underline{Tarski-Vaught chain axioms:} if $\delta$ is a limit ordinal, $\seq{M_i : i < \delta}$ is a $\lea$-increasing chain, and $M := \bigcup_{i < \delta} M_i$, then:
    \begin{enumerate}
    \item $M \in \K$.
    \item $M_0 \lea M$.
    \item Smoothness: if $N \in \K$ is such that $M_i \lea N$ for all $i < \delta$, then $M \lea N$.
    \end{enumerate}
  \item \underline{L\"{o}wenheim-Skolem-Tarski (LST) axiom:} there exists a cardinal $\lambda \ge |\tau (\K)| + \aleph_0$ such that for any $N \in \K$ and any $A \subseteq |N|$, there exists $M \in \K$ such that $A \subseteq |M|$, $M \lea N$, and $\|M\| \le |A| + \lambda$. We write $\LS (\K)$ for the least such $\lambda$.
  \end{enumerate}
\end{defin}

\begin{thm}\label{general-aec-thm}
  Let $\cF$ be a fragment of $\Lstruct{\kappa}_{\infty, \omega} (\tau)$ and let $T \subseteq \cF$ be a theory. Then $\K := \Mod_{\cF} (T)$ is an AEC with $\LS (\K) \le |\cF| + \kappa$.
\end{thm}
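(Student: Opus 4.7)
The plan is to verify each AEC axiom in turn. Partial order and isomorphism closure are immediate from the definition of $\preceq_\cF^*$. For coherence, suppose $N_0 \subseteq N_1 \preceq_\cF^* N_2$ and $N_0 \preceq_\cF^* N_2$: then $\cF$-truth over $N_0$ transfers via $N_2$, and for the structural-cardinality clause, if $|\phi(N_0, \bb)| < \kappa$ for some $\Qstruct$-main-formula $\phi$, then $\phi(N_0, \bb) \subseteq \phi(N_1, \bb) \subseteq \phi(N_2, \bb) = \phi(N_0, \bb)$, forcing equality.

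For the Tarski--Vaught chain axioms, the heart of the matter is to show that for any $\preceq_\cF^*$-chain $\seq{N_i : i < \delta}$ with union $N$, each $N_i \preceq_\cF^* N$. I would prove by induction on $\phi \in \cF$ that for every $\ba \in N_i$, $N_i \vDash \phi(\ba) \iff N \vDash \phi(\ba)$; the atomic, Boolean, and existential cases are routine. The crucial case is $\phi_0 = \Qstruct_{M', \bA'} x \by \phi(x, \bz) \seq{\psi_j(y_j, \bz) : j < n}$. In the forward direction, $N_i \vDash \phi_0(\ba)$ gives $|\phi(N_i, \ba)| < \kappa$; iterating the structural-cardinality clause along the chain yields $\phi(N_{i'}, \ba) = \phi(N_i, \ba)$ for all $i' \ge i$, hence $\phi(N, \ba) = \phi(N_i, \ba)$, and the witnessing substructure and isomorphism transfer verbatim. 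In the reverse direction, $|\phi(N, \ba)| < \kappa$ together with the induction hypothesis on $\phi$ yields $|\phi(N_i, \ba)| \le |\phi(N, \ba)| < \kappa$, and the same iteration applies. Taking $\ba$ empty and $\phi \in T$ gives $N \vDash T$, and smoothness follows by combining the same arguments with the assumption $N_i \preceq_\cF^* N'$ for each $i$.

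The Löwenheim--Skolem--Tarski bound is the main obstacle, because the Skolem hull must preserve not merely $\cF$-truth but also the stronger structural-cardinality condition. Set $\lambda := |\cF| + \kappa$. Given $N \in \K$ and $A \subseteq |N|$, build an $\omega$-chain $\seq{A_n : n < \omega}$ with $A_0 = A$ and $|A_n| \le |A| + \lambda$, closing at each stage under: (i) existential witnesses for satisfied $\cF$-formulas with parameters in $A_n$; and, for each $\Qstruct$-main-formula $\phi$ occurring in $\cF$ and each $\bb \in A_n^{<\omega}$, either (ii) all of $\phi(N, \bb)$ together with all relevant $\psi_j(N, \bb)$ when $|\phi(N, \bb)| < \kappa$, or else (iii) a fixed subset of $\phi(N, \bb)$ of size $\kappa$ when $|\phi(N, \bb)| \ge \kappa$. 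Let $M$ be the substructure of $N$ with universe $\bigcup_n A_n$; a standard cardinal computation gives $\|M\| \le |A| + \lambda$. The hard part is verifying $M \preceq_\cF^* N$ by induction on formula complexity: the $\Qstruct$-case exploits the dichotomy in the construction, for if $|\phi(M, \bb)| < \kappa$ then clause (iii) cannot have triggered (else $\kappa$-many elements of $\phi(N, \bb)$ would have been injected into $M$), so $|\phi(N, \bb)| < \kappa$, clause (ii) fired, and $\phi(N, \bb) \subseteq |M|$; the induction hypothesis on $\phi$ then gives $\phi(M, \bb) = \phi(N, \bb)$, simultaneously establishing the structural-cardinality clause for $\phi$.
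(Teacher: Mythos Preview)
Your treatment of the Tarski--Vaught chain axiom is essentially the paper's: both proceed by induction on the subformulas of $\cF$, and both dispose of the $\Qstruct$ step via the dichotomy on whether $|\phi(N_i,\bb)|<\kappa$. The paper phrases this as a single case split on the size of $\phi(N_0,\bb)$, you phrase it as the forward and backward directions of the biconditional, but the content is identical. For the remaining axioms the paper says only that they are ``straightforward'' (by analogy with $\bL(Q)$), so you have supplied detail the paper omits.

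There is, however, a real gap in your L\"owenheim--Skolem--Tarski construction. In closure step~(ii) you absorb ``all of $\phi(N,\bb)$ together with all relevant $\psi_j(N,\bb)$'' whenever $|\phi(N,\bb)|<\kappa$. The set $\phi(N,\bb)$ has fewer than $\kappa$ elements, but nothing bounds $|\psi_j(N,\bb)|$: the inclusion $\psi_j(N,\bb)\subseteq\phi(N,\bb)$ is part of the \emph{satisfaction} clause for $\Qstruct_{M,\bA}$, not a blanket restriction on the formulas of $\cF$, and the structural-cardinality clause in the definition of $\preceq_\cF^*$ fires whenever $|\phi(N_1,\bb)|<\kappa$ regardless of whether the $\Qstruct$-formula is actually satisfied. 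Hence step~(ii) may inject a set of size far exceeding $\kappa$, and your ``standard cardinal computation'' giving $\|M\|\le|A|+\lambda$ does not go through. Nor can you repair this by handling a large $\psi_j(N,\bb)$ as in clause~(iii), since the $\preceq_\cF^*$ condition still demands $\psi_j(M,\bb)=\psi_j(N,\bb)$ once $|\phi(M,\bb)|<\kappa$. (In the paper's actual applications---the $\forall\Qstruct$-theories of Theorem~\ref{logic-to-aec} and Theorem~\ref{aec-to-logic}---the theory itself forces $\psi_j(N,\bb)\subseteq\phi(N,\bb)$ for every parameter tuple, so the difficulty evaporates; but the present theorem places no such hypothesis on $T$ or $\cF$.)
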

\begin{proof}
  This is a very similar situation to $\bL(Q)$ where $Q$ is ``there exists uncountably many'', and most the axioms are straightforward. We only show that $\K$ is closed under unions of increasing chains. Let $\seq{N_i \mid i < \delta}$ be continuous, $\preceq_\cF^*$-increasing and let $N_\delta := \bigcup_{j<\delta} N_j$. We want to show that $N_0 \preceq_{\cF}^\ast N_\delta$. We work by induction on formulas. The steps are standard except for the structural quantifier.  Let $\chi (\bz)$ be a formula in $\cF$ of the form $\Qstruct_{M, \bA} x \by \phi (x, \bz) \seq{\psi_i (y_i, \bz) : i < n}$.

  By elementarity of the inner formulas, we know that for $\psi \in \Psi := \{\phi\} \cup \{\psi_i : i < n\}$, $\psi(N_\delta, \bb) = \bigcup_{i<\delta} \psi (N_i, \bb)$.

  We consider two cases. If $|\phi (N_0, \bb)| < \kappa$, then by the definition of $\preceq_{\cF}^\ast$, $\psi (N_0, \bb) = \psi (N_i, \bb)$ for all $i < \delta$ and $\psi \in \Psi$. This implies that for all $\psi \in \Psi$, $\psi (N_0, \bb) = \psi (N_\delta, \bb)$, and so $N_0 \models \chi (\bb)$ if and only if $N_\delta \models \chi (\bb)$.

  The remaining case is when $|\phi (N_0, \bb)| \ge \kappa$. In this case, $N_0 \not \models \chi (\bb)$ (by definition of $\Lstruct{\kappa}$) and since $\phi (N_0, \bb) \subseteq \phi (N_\delta, \bb)$, we must also have that $|\phi (N_\delta, \bb)| \ge \kappa$ and hence $N_\delta \not \models \chi (\bb)$.

\end{proof}

The following example is due to Shelah (see the beginning of section 3 of \cite{sh88}) and has been further examined by Kueker \cite[Example 6.3]{kueker2008}:

\begin{example}
Fix an infinite cardinal $\lambda$.  Set $\K^\lambda$ to be the AEC consisting of well-orderings $(X, <)$ of either size $\lambda$ or of order type $\lambda^+$ with the ordering on the class $\K^\lambda$ being initial segment.  The rigidity of well-orderings implies that this is an AEC with intersections with $\LS(\K^\lambda) = \lambda$.  This class can be axiomatized in $\Lstruct{\lambda^+}_{\lambda^{++}, \omega}$ by the sentence
$$\forall x \bigvee_{\lambda \leq \alpha < \lambda^+} \Qstruct_{(\alpha, \in)} y (y < x)$$

Several natural logics whose classes of models form AECs are unsuccessful in axiomatizing this class (although Remark \ref{basic-rmk}.(\ref{lw1wq}) implies that $\K^{\aleph_0}$ is $\bL_{\omega_2, \omega}(Q)$ axiomatizable).
\end{example}

\begin{remark}
Although classes axiomatizable in $\Lstruct{\kappa}_{\infty,\omega}$ form AECs, it seems unlikely that every AEC is axiomatizable with structural quantifiers.  In particular, classes axiomatizable using Shelah's cofinality quantifier $Q^{cf}_\kappa$ (see \cite[Definition 1.3]{sh43}) form (with the right notion of strong substructure) an AEC.  However, it seems unlikely that such classes can be axiomatized using structural quantifiers as there is no way to pick out a witnessing sequence in the linear order even after functorial expansion.  By Theorem \ref{aec-to-logic}, this means that these classes are not closed under intersections.
\end{remark}

In the next section, we will be interested in sentences in structural logic of a particular form:

\begin{defin}\label{aq-def}
  A sentence of $\Lstruct{\kappa}_{\infty, \omega}$ is called a $\forall\Qstruct$-sentence if it is of the form:

  $$
  \forall \bz \bigvee_{(M, \bA) \in S} \Qstruct_{M, \bA} x \by \phi (x, \bz) \seq{\psi_i (y_i, \bz) : i < n}
  $$

  where $\phi$ and the $\psi_i$'s are quantifier-free and $S$ is a non-empty set. A theory in $\Lstruct{\kappa}_{\infty, \omega}$ is called a \emph{$\forall\Qstruct$-theory} if it consists exclusively of $\forall\Qstruct$-sentences.
\end{defin}

Note that this generalizes the usual definition of a universal sentence in the following sense:

\begin{lem}\label{univ-gen}
  If $\phi = \forall \bz \psi (\bz)$, where $\psi$ is quantifier-free in $\Ll_{\infty, \omega} (\tau)$, then there exists a $\forall\Qstruct$-formula $\phi'$ of $\Lstruct{\aleph_0}_{\infty, \omega} (\tau)$ such that $\phi$ and $\phi'$ have the same models.
\end{lem}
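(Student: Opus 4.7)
The strategy is to encode the truth value of the quantifier-free matrix $\psi(\bz)$ inside a single trivial structural quantifier and then universalize. The key point is that because $\psi(\bz)$ does not involve the dummy variable $x$, the solution set $(\neg\psi)(N, \bb) := \{a \in |N| : N \models \neg\psi(\bb)\}$ equals either $|N|$ or $\emptyset$, with the second alternative holding exactly when $N \models \psi(\bb)$.

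I would therefore take $M_\emptyset$ to be the $\emptyset$-structure whose universe is the ordinal $0$ (permitted since $0 < \aleph_0$) and set
$$\phi' := \forall \bz \; \Qstruct_{M_\emptyset} x \; \neg \psi(\bz).$$
By the semantic clause for $\Qstruct$ in Section \ref{struct-sec}, $N \models \Qstruct_{M_\emptyset} x \chi(x, \bb)$ holds exactly when $\chi(N, \bb) = \emptyset$, since the only $\tau_0$-substructure of $N$ with empty universe is $M_\emptyset$ itself. Combining with the observation above yields $N \models \Qstruct_{M_\emptyset} x \neg\psi(\bb)$ iff $N \models \psi(\bb)$, and universalizing over $\bz$ gives $\phi \equiv \phi'$.

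Syntactically, $\phi'$ is a $\forall\Qstruct$-sentence of $\Lstruct{\aleph_0}_{\infty, \omega}(\tau)$ in the sense of Definition \ref{aq-def}: the index set of the disjunction is the singleton $S = \{(M_\emptyset, \emptyset)\}$, the parameter $n$ is $0$ (so the tuple $\by$ and the sequence $\seq{\psi_i : i < n}$ are empty), and the matrix $\neg\psi(\bz)$ is quantifier-free in $\Ll_{\infty, \omega}(\tau)$ because $\psi$ is.

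The only step that could cause trouble is the convention check allowing $M$ with empty universe. If one insists on non-empty $M$, then, provided $\bz$ contains at least one variable $z_0$, one can instead use the one-element $\emptyset$-structure $M_1$, take $n = 1$ with $\bA = (\emptyset)$, set the matrix $\phi(x, \bz) := (x = z_0)$, and the secondary formula $\psi_0(y, \bz) := (y = z_0) \land \neg\psi(\bz)$; unwinding shows that $\Qstruct_{M_1, (\emptyset)} x y \phi(x,\bz) \psi_0(y,\bz)$ holds precisely when $\psi(\bz)$ does, and the corner case of empty $\bz$ reduces to a trivial case analysis of $\psi$. Apart from this convention issue, the argument is routine definition-chasing.
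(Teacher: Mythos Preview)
Your argument is correct and is essentially the same idea as the paper's proof: encode the truth of the quantifier-free matrix via a trivial structural quantifier over a fixed tiny $\emptyset$-structure. The paper uses the one-point structure and the inner formula $x = z_0 \land \psi(\bz)$ (so the solution set is a singleton iff $\psi$ holds), whereas you use the empty structure and the inner formula $\neg\psi(\bz)$ (so the solution set is empty iff $\psi$ holds); your fallback with $n=1$ and $A_0 = \emptyset$ is yet another equivalent encoding. The paper handles the $\bz = \emptyset$ corner case by the same reduction to $\forall z_0 \phi$ that you sketch, so nothing is missing.
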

\begin{proof}
  Say $\bz = \seq{z_i : i < n}$ for $n < \omega$. If $n > 0$, take $\phi' = \forall \bz \Qstruct_{1} x (x = z_0 \land \psi (\bz))$, where we see $1$ as a structure in the empty language whose universe has one element. If $n = 0$, then there are two cases: either the empty structure satisfies $\phi$, in which case $\phi$ is equivalent to $\forall z_0 \phi$, and we can proceed as before, or the empty structure does \emph{not} satisfy $\phi$ which must mean (since $\phi$ is quantifier-free) that there are constant symbols in $\tau$. In this case the empty structure is never a $\tau$-structure, and so as before $\phi$ is equivalent to $\forall z_0 \phi$.
\end{proof}

One may argue that a $\forall\Qstruct$ sentence is more like a $\forall \exists$ sentence than just a universal sentence. However, Remark \ref{tarski-gen-rmk} highlights that the inner disjunction plays the role that a quantifier-free formula would play in the proof of Tarski's presentation theorem. Thus $\forall \Qstruct$ sentences play the same role that universal sentences play in $\Ll_{\infty, \omega}$.

\section{Axiomatizing abstract elementary classes with intersections}

Recall the definition of an AEC with intersections:

\begin{defin}[{\cite[Definition 1.2]{non-locality}}]\label{intersec-def}
  Let $\K$ be an AEC.

  \begin{enumerate}
  \item For $N \in \K$ and $A \subseteq |N|$, write $\ccl_{\K}^N (A)$ for the set $\bigcap \{N_0 : N_0 \lea N \land A \subseteq |N_0|\}$. Almost always, $\K$ is clear from context and we omit it. We abuse notation and also write $\ccl^N (A)$ for the $\tau (\K)$-substructure of $N$ with universe $\ccl^N (A)$. When $\ba \in \fct{<\infty}{M}$, we write $\ccl^N (\ba)$ for $\ccl^N (\ran (\ba))$.
  \item We say that $\K$ \emph{has intersections} (or \emph{admits intersections} or is an AEC \emph{with intersections}, or is \emph{closed under intersections}) if $\ccl^N (A) \lea N$ for any $N \in \K$ and $A \subseteq |N|$.
  \end{enumerate}
\end{defin}

We show that the class of models of a $\forall\Qstruct$-theory in structural logic is an AEC with intersections:

\begin{thm}\label{logic-to-aec}
  Let $T$ be a $\forall\Qstruct$-theory of $\Lstruct{\kappa}_{\infty, \omega}$ and let $\cF$ be the smallest fragment containing $T$. Then $\Mod_{\cF} (T)$ is an AEC with intersections and L\"{o}wenheim-Skolem-Tarski number at most $|\cF| + \kappa^-$.
\end{thm}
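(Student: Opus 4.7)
The plan is to build on Theorem~\ref{general-aec-thm}, which already gives that $\Mod_\cF(T)$ is an AEC with $\LS(\K) \leq |\cF| + \kappa$. What remains is to show (a) the class admits intersections and (b) the LST bound sharpens from $\kappa$ to $\kappa^-$. Both will follow from a single size bound that is special to $\forall\Qstruct$-theories. I would first establish the \emph{key observation}: for every $\Qstruct$-formula $\chi(\bz) = \Qstruct_{M', \bA'} x \by \phi(x, \bz) \seq{\psi_i(y_i, \bz) : i < n}$ appearing in $\cF$, every $N \in \K$ and every $\bb \in N$, $|\phi(N, \bb)| < \kappa$ (and hence $\leq \kappa^-$). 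Indeed, $\cF$ is generated by atomic formulas and subformulas of $T$, so $\chi$ must be a disjunct of some $\forall\Qstruct$-sentence $\forall \bz \bigvee_{(M, \bA) \in S} \Qstruct_{M, \bA} x \by \phi(x, \bz) \seq{\psi_i(y_i, \bz) : i < n}$ of $T$, whose inner $\phi$ and $\psi_i$ are by Definition~\ref{aq-def} shared across all disjuncts; since $N \models T$, some $(M_0, \bA_0) \in S$ witnesses the sentence at $\bb$, forcing $|\phi(N, \bb)| = |M_0| < \kappa$ regardless of which disjunct $\chi$ itself happens to be.

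For intersections, set $M := \ccl^N(A)$ (assuming $A \neq \emptyset$ or $\tau$ has a constant, so that $M$ is a well-defined $\tau$-substructure of $N$). The heart is condition~(ii) of $\preceq_\cF^*$. Given a $\Qstruct$-formula $\chi \in \cF$ as above and $\bb \in M$, pick any $N' \lea N$ with $A \subseteq |N'|$; then $\bb \in M \subseteq |N'|$, and by the key observation $|\phi(N', \bb)| \leq |\phi(N, \bb)| < \kappa$. Applying $N' \preceq_\cF^* N$ to $\chi$ forces $\phi(N', \bb) = \phi(N, \bb)$, so $\phi(N, \bb) \subseteq |N'|$; intersecting over all such $N'$ yields $\phi(N, \bb) \subseteq |M|$, and likewise for each $\psi_i$. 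Since $\phi$ and the $\psi_i$ are quantifier-free, this gives $\phi(M, \bb) = \phi(N, \bb)$ and $\psi_i(M, \bb) = \psi_i(N, \bb)$, which is exactly condition~(ii). Condition~(i), $M \preceq_\cF N$, then follows by a routine induction on $\cF$-formulas: atomic and quantifier-free cases are immediate from $M \subseteq N$; the $\Qstruct$-case reduces to (ii) since the witness sets and the induced $\tau_0$-substructures in $M$ and $N$ coincide; and the only top-level $\forall$-formulas in $\cF$ are the sentences of $T$, handled disjunct-by-disjunct via the $\Qstruct$-case. In particular $M \models T$, so $M \in \K$ and $M \lea N$.

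For the sharpened LST bound, I would run a Tarski-Vaught style closure: start with $A$ together with interpretations of any constants, and at each of $\omega$ stages adjoin both the $\tau$-substructure generated by the current set and, for every $\Qstruct$-formula $\chi \in \cF$ and every tuple $\bb$ already present, the sets $\phi(N, \bb)$ and $\psi_i(N, \bb)$. By the key observation each added set has cardinality $\leq \kappa^-$, so a straightforward bookkeeping gives $\|M\| \leq |A| + |\cF| + \kappa^-$, and the same induction as above shows $M \preceq_\cF^* N$ with $M \models T$. The main obstacle throughout is really the key observation and deploying it correctly in condition~(ii): without the $\forall\Qstruct$-form of $T$ pinning all disjuncts to the same inner $\phi$, the witness set $\phi(N, \bb)$ could have arbitrary cardinality, (ii) could fail on $\ccl^N(A)$ even though it holds on each individual $N' \lea N$, and no single closure step in the TV-construction could be bounded by $\kappa^-$.
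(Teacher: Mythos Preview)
Your proposal is correct and follows essentially the same route as the paper: both rely on Theorem~\ref{general-aec-thm} for the AEC axioms, then show $\ccl^N(A) \preceq_\cF^* N$ by exploiting that for any $\Qstruct$-formula in $\cF$ the solution set $\phi(N,\bb)$ has size $<\kappa$ (because $T$ is $\forall\Qstruct$, so some disjunct holds), and hence is absorbed into every $N' \lea N$ containing $A$. Your organization is a bit cleaner---isolating the ``key observation'' up front lets you establish condition~(ii) of $\preceq_\cF^*$ unconditionally, after which the $\Qstruct$-step of the induction for condition~(i) is immediate, whereas the paper runs a small case split on whether $N \models \chi_{M,\bA}(\bb)$ or $N_0 \models \chi_{M,\bA}(\bb)$ to reach the same conclusion. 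The LST argument is also the same: the paper simply asserts $|\ccl^N(A)| \le |A| + |\cF| + \kappa^-$, which is exactly your Tarski--Vaught closure (with the observation that this closure \emph{is} $\ccl^N(A)$).
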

\begin{proof}
  Let $\K := \Mod_{\cF} (T)$. By Theorem \ref{general-aec-thm}, $\K$ is an AEC. We show that $\K$ has intersections. Let $N \in \K$, $A \subseteq |N|$, and let $N_0 := \ccl^N (A)$. We show that $N_0 \preceq_{\cF}^* N$. We proceed by induction on formulas. The atomic, conjunction, disjunction, and negation cases are trivial. The universal case also follows directly from our assumption on the theory (recall that the definition of fragment used here does \emph{not} require closure under the finitary operations).

  It remains to deal with the case of a formula $\chi_{M, \bA} (\bz)$ of the form $\Qstruct_{M, \bA} x \by \phi (x, \bz) \seq{\phi_i (y_i, \bz) : i < n}$. Let $\Psi := \{\phi \} \cup \{\psi_i : i < n\}$ and let $\bb \in N_0$. Assume first that $N \models \chi_{M, \bA} (\bb)$. Let $\psi \in \Psi$. For any $N'$ with $A \subseteq |N'|$ and $N' \preceq_{\cF}^\ast N$ we must have (by definition of $\ccl$) that $\bb \in N'$ and $\psi (N', \bb) = \psi (N, \bb)$. It follows that also $\psi (N_0, \bb) = \psi (N, \bb)$, so $N_0 \models \chi_{M, \bA} (\bb)$. Assume now that $N_0 \models \chi_{M, \bA} (\bb)$. By the assumption on the theory, there must exist a $(M', \bA')$ such that $N \models \chi_{M', \bA'} (\bb)$. By the previous argument, $N_0 \models \chi_{M', \bA'} (\bb)$, and moreover $\psi (N, \bb) = \psi (N_0, \bb)$ for any $\psi \in \Psi$. Since $N_0 \models \chi_{M, \bA} (\bb)$, this directly implies that $N \models \chi_{M, \bA} (\bb)$.

  By a similar argument, we must have (regardless of the truth value of $\chi_{M, \bA}$ in $N_0$ or $N$) that for any $\psi \in \Psi$, $|\psi (N, \bb)| < \kappa$ and $\psi (N_0, \bb) = \psi (N, \bb)$. It also follows that $|\ccl^N (A)| \le |A| + |\cF| + \kappa^-$, so $\LS (\K) \le |\cF| + \kappa^-$.
\end{proof}

We now work toward a converse. For this, we will use the notion of a functorial expansion \cite[Definition 3.1]{sv-infinitary-stability-afml}. This is a class in an expanded language that looks exactly the same as the original AEC. For example, the Morleyization of an elementary class is a functorial expansion. For more examples, see \cite[\S3]{sv-infinitary-stability-afml}.

\begin{defin}\label{funct-def}
  For $\K$ an AEC, a \emph{functorial expansion} of $\K$ is a class $K^+$ of structures in a vocabulary $\tau (K^+) \supseteq \tau (\K)$ (in this paper always finitary) such that the reduct map is structure-preserving bijection from $K^+$ onto $\K$. That is, if two structures are isomorphic in $\K$ then their expansions to $K^+$ are still isomorphic and if $M$ is a $\K$-substructure of $N$, then their expansions to $K^+$ are substructures. For a functorial expansion $K^+$, we let $\K^+ := (K^+, \leap{\K^+})$, where $M \leap{\K^+} N$ if and only if $M \rest \tau (\K) \lea N \rest \tau (\K)$. We will also say that $\K^+$ is a functorial expansion of $\K$.
\end{defin}

\begin{remark}\label{funct-rmk}
  An AEC admits intersections if and only if it admits intersections in some functorial expansion.
\end{remark}

In addition to the obvious monotonicity properties of $\ccl^N$, we will use the following basic fact about AECs with intersections:

\begin{fact}[{\cite[Proposition 2.14(4)]{ap-universal-apal}}]
  Let $\K$ be an AEC with intersections. Let $M \lea N$ and let $A \subseteq |M|$. Then $\ccl^M (A) = \ccl^N (A)$.
\end{fact}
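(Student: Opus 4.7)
The plan is to prove the two containments $\ccl^N(A) \subseteq \ccl^M(A)$ and $\ccl^M(A) \subseteq \ccl^N(A)$ separately, relying essentially on the transitivity of $\lea$ and on the coherence axiom in the definition of an AEC. Both directions proceed by exhibiting one closure as a $\K$-substructure that is available in the intersection defining the other closure.

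First I would establish $\ccl^N(A) \subseteq \ccl^M(A)$. Since $\K$ admits intersections, $\ccl^M(A) \lea M$, and since $M \lea N$, transitivity of $\lea$ gives $\ccl^M(A) \lea N$. As $A \subseteq |\ccl^M(A)|$, the structure $\ccl^M(A)$ is among the $\K$-substructures of $N$ containing $A$ that are intersected to form $\ccl^N(A)$, so the desired containment follows directly from the definition of $\ccl^N(A)$.

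For the reverse containment $\ccl^M(A) \subseteq \ccl^N(A)$, the point is to show $\ccl^N(A) \lea M$, since then (as $A \subseteq |\ccl^N(A)|$) the structure $\ccl^N(A)$ appears among those intersected to form $\ccl^M(A)$. To apply the coherence axiom, I need $\ccl^N(A) \subseteq |M|$. But this is exactly what the first step has bought me: $\ccl^N(A) \subseteq \ccl^M(A) \subseteq |M|$. Then, because $\K$ admits intersections, $\ccl^N(A) \lea N$; combined with $M \lea N$, coherence gives $\ccl^N(A) \lea M$, as wanted.

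The only step that requires care is the use of coherence in the second half, which is precisely why the argument is not symmetric: one must already know the set-theoretic inclusion $\ccl^N(A) \subseteq |M|$ before coherence can be invoked, and the first half of the proof is what provides this inclusion. The rest is bookkeeping with the AEC axioms and the definition of $\ccl$.
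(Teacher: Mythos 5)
Your argument is correct: the first containment follows from $\ccl^M(A)\lea M\lea N$ being one of the structures intersected to form $\ccl^N(A)$, and the second from coherence applied to $\ccl^N(A)\subseteq M\lea N$ together with $\ccl^N(A)\lea N$. The paper states this as a Fact cited from elsewhere without proof, and your argument is exactly the standard one (note you could also get $\ccl^N(A)\subseteq |M|$ directly from the observation that $M$ itself is among the structures intersected in forming $\ccl^N(A)$, rather than via the first containment).
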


In any AEC, one can define a notion of orbital types (also called Galois types in the literature), see \cite[II.1.9]{shelahaecbook}. We have no use for the general notion of orbital type in this paper, we will just recall what orbital types of finite sequences over the empty set look like in AECs admitting intersections:

\begin{defin}
  Let $\K$ be an AEC with intersections. For $M, N \in \K$ and $\ba \in \fct{<\omega}{M}$, $\bb \in \fct{<\omega}{N}$, we write $(\ba, M) \equiv (\bb, N)$ if there exists $f: \ccl^M (\ba) \cong \ccl^N (\bb)$ such that $f (\ba) = \bb$. Note that this is an equivalence relation. We denote by $\Dd (\K)$ the set of all $\equiv$-equivalence classes.
\end{defin}
\begin{remark}\label{small-repr}
  Let $\K$ be an AEC with intersections. If $M_0 \lea M$ are both in $\K$ and $\ba \in \fct{<\omega}{M}$, then $(\ba, M_0) \equiv (\ba, M)$ (as witnessed by the identity map). In particular, for any $(\bb, N) \in \Dd (\K)$, the Löwenheim-Skolem-Tarski axiom of AECs implies that there exists $N_0 \in \K_{\le \LS (\K)}$ such that $(\bb, N_0) \in \Dd (\K)$.
\end{remark}

\begin{lem}\label{embed-dk}
  Let $\K$ be an AEC with intersections. Then $|\Dd (\K)| \le 2^{\LS (\K)}$. More precisely, $|\Dd (\K)| < \LS (\K)^+ + \mu$, where $\mu$ is the least cardinal such that for any (not necessarily increasing) sequence $\seq{M_i : i < \mu}$ of elements of $\K_{\le \LS (\K)}$, there exists $i < j < \mu$ such that $M_j$ embeds into $M_i$.
\end{lem}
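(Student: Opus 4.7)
My strategy is to factor $\Dd (\K)$ through isomorphism types in $\K_{\le \LS (\K)}$ and bound the image via a bad-sequence argument that exploits the definition of $\mu$. First I define $\Phi : \Dd (\K) \to \K_{\le \LS (\K)}/{\cong}$ by $[(\ba, N)] \mapsto [\ccl^N (\ba)]$. This is well-defined because equivalent pairs have isomorphic closures by the very definition of $\equiv$, and the codomain is correct because applying LST to the finite set $\ba$ yields $\|\ccl^N (\ba)\| \le \LS (\K)$. Each fiber $\Phi^{-1}([M_0])$ has size at most $\LS (\K)$: by Remark \ref{small-repr} and transport along an isomorphism, every class in the fiber is represented by some $(\bb, M_0)$ with $\ccl^{M_0}(\bb) = M_0$, and two such representatives give the same class iff $\bb, \bb'$ lie in the same $\aut (M_0)$-orbit, so the fiber injects into $\fct{<\omega}{|M_0|}$. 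This yields the crude bound $|\Dd (\K)| \le |\im \Phi| \cdot \LS (\K) \le 2^{\LS (\K)}$.

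For the refined bound, suppose toward contradiction that $|\Dd (\K)| \ge \LS (\K)^+ + \mu$. Then $|\im \Phi| \ge \LS (\K)^+ + \mu$, so I can fix pairwise non-isomorphic $M_i \in \im \Phi$ for $i < \LS (\K)^+ + \mu$; each is ``finitely generated'' in the sense that $M_i = \ccl^{M_i}(\ba_i)$ for some finite $\ba_i$. The key combinatorial observation---which I expect to be the main obstacle---is that for each $i$, the set $E_i := \{j \neq i : M_j \hookrightarrow M_i\}$ has cardinality at most $\LS (\K)$: any $\K$-embedding $f : M_j \to M_i$ has image $f[M_j] = \ccl^{M_i}(f(\ba_j))$, a $\K$-substructure of $M_i$ determined by a finite tuple from $|M_i|$; the set of such tuples has size at most $\LS (\K)$, and distinct $j \in E_i$ produce non-isomorphic substructures since the $M_j$ are pairwise non-isomorphic.

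Finally, I construct a bad sequence of length $\mu$ by transfinite recursion. At stage $\beta < \mu$, having chosen $\seq{i_\alpha : \alpha < \beta}$, I pick $i_\beta$ distinct from all previous $i_\alpha$ and avoiding $\bigcup_{\alpha < \beta} E_{i_\alpha}$; the excluded set has cardinality at most $|\beta| \cdot \LS (\K)$, which is strictly less than the pool size $\LS (\K)^+ + \mu$ (since $|\beta| < \mu \le \LS (\K)^+ + \mu$ and $\LS (\K) < \LS (\K)^+ \le \LS (\K)^+ + \mu$), so a valid $i_\beta$ exists. The resulting sequence $\seq{M_{i_\beta} : \beta < \mu}$ has no $\alpha < \beta$ with $M_{i_\beta} \hookrightarrow M_{i_\alpha}$, contradicting the definition of $\mu$. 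The delicate point here is ensuring the pool stays large at every stage; using the combined bound $\LS (\K)^+ + \mu$ as the pool size (rather than just $\mu$) makes the recursion go through uniformly, regardless of whether $\mu$ is below or above $\LS (\K)^+$ and regardless of the cofinality of $\mu$.
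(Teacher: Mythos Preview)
Your proof is correct, and it reaches the same conclusion by a genuinely different route from the paper's argument.

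The paper proceeds more directly: setting $\theta := \LS(\K)^+ + \mu$, it builds a sequence $\seq{(\ba_i, M_i) : i < \theta}$ with $M_i \in \K_{\le \LS(\K)}$ such that the type of $(\ba_i, M_i)$ is not realized in any earlier $M_{i_0}$. This is possible because at stage $i$ only $|i| \cdot \LS(\K) < \theta$ types have been used up. Then the minimality of $\mu$ yields $i_0 < i$ with a $\K$-embedding $f : M_i \hookrightarrow M_{i_0}$, and $(f(\ba_i), M_{i_0}) \equiv (\ba_i, M_i)$ gives the contradiction immediately. There is no factoring through isomorphism types and no analysis of the sets $E_i$.

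Your approach instead decomposes $\Dd(\K)$ via the map $\Phi$ to isomorphism types of closures, bounds each fiber by $\LS(\K)$, and then runs a separate bad-sequence recursion on $\operatorname{im}\Phi$ using the bound $|E_i| \le \LS(\K)$ (which in turn relies on the observation that images of embeddings into $M_i$ are closures of finite tuples). The paper's version is shorter because it never leaves the level of types and so avoids the $E_i$ computation entirely; your version is a bit longer but makes the two contributions to the bound---fiber size versus number of isomorphism classes of finitely generated closures---visible as separate ingredients.
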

\begin{proof}
  Let $\theta := \mu + \LS (\K)^+$. Suppose for a contradiction that $|\Dd (\K)| \ge \theta$. We build $\seq{(\ba_i, M_i) : i < \theta}$ such that for all $i < \theta$:

  \begin{enumerate}
  \item $\ba_i \in \fct{<\omega}{M_i}$.
  \item $M_i \in \K_{\le \LS (\K)}$.
  \item There is no $i_0 < i$ and $\ba \in \fct{<\omega}{M_{i_0}}$ such that $(\ba, M_{i_0}) \equiv (\ba_i, M_i)$.
  \end{enumerate}

  This is possible by the assumption that $|\Dd (\K)| \ge \theta$ and $\theta > \LS (\K)$. This is enough: by the definition of $\mu$, there exist $i_0 < i < \theta$ such that $M_i$ embeds into $M_{i_0}$. This contradicts the assumption that $(\ba_i, M_i)$ had no realization in $M_{i_0}$.
\end{proof}

\begin{thm}\label{aec-to-logic}
  Let $\K$ be an AEC with intersections. Then there is
\begin{enumerate}
	\item A functorial expansion $\K^+$ of $\K$ with vocabulary $\tau^+= \tau (\K) \cup\{E_n \mid n < \omega\}$.
	\item A $\forall\Qstruct$-theory $T$ in $\Lstruct{\LS (\K)^+}_{\left(\LS (\K) + |\Dd (\K)|\right)^+, \omega}(\tau^+)$.
\end{enumerate}

such that $\K^+ = \Mod_{\cF} (T)$, where $\cF$ is the smallest fragment containing $T$.
\end{thm}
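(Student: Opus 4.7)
I would define the functorial expansion $\K^+$ by interpreting $E_n$ as the graph of the closure operator: $(\bz, y) \in E_n^{N^+}$ iff $y \in \ccl^N(\bz)$. Functoriality uses the quoted fact that $\ccl^M(A) = \ccl^N(A)$ whenever $M \lea N$. To axiomatize $\K^+$, I code the orbital types via structural quantifiers: by Remark \ref{small-repr}, fix for each $d \in \Dd(\K)$ a representative $(\ba_d, M^d)$ with $M^d = \ccl^{M^d}(\ba_d) \in \K_{\le \LS(\K)}$ and universe an ordinal $< \LS(\K)^+$, and let $\tilde{M}^d$ be the corresponding $\tau^+$-expansion (where $E_n^{\tilde{M}^d}$ is defined from $\ccl^{M^d}$). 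Put
\[
\chi_d(\bz) := \Qstruct_{\tilde{M}^d, \bA_d} \, x \, \by \, E_n(\bz, x) \, \seq{y_i = z_i : i < n},
\]
where $n = |\ba_d|$ and $\bA_d = \seq{\{a_{d,i}\} : i < n}$. Writing $\Dd_n(\K) \subseteq \Dd(\K)$ for the types of $n$-tuples, the theory $T$ consists of: \textbf{(I)} for each $n < \omega$, the $\forall\Qstruct$-sentence $\forall \bz \bigvee_{d \in \Dd_n(\K)} \chi_d(\bz)$; and \textbf{(II)} the universal monotonicity schema $\forall \bz \, \bz' \, y \, \bigl(\bigwedge_{i < m} E_n(\bz, z'_i) \wedge E_m(\bz', y) \rightarrow E_n(\bz, y)\bigr)$ for all $n, m < \omega$, expressed as a $\forall\Qstruct$-sentence via Lemma \ref{univ-gen}. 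The required size bounds follow: each $\tilde{M}^d$ has universe $< \LS(\K)^+$, and the outer disjunction in (I) has size $\leq |\Dd(\K)|$.

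The inclusion $\K^+ \subseteq \Mod(T)$ is routine from Remark \ref{small-repr} and idempotence of $\ccl$. For the converse, given $N^+ \models T$, set $N := N^+ \rest \tau(\K)$ and, for each finite $\bz$, let $Q_\bz$ be the $\tau(\K)$-substructure of $N$ on $E_{|\bz|}(\bz, N^+)$. Axiom (I) provides a $\tau^+$-isomorphism $f_\bz: Q_\bz \to \tilde{M}^d$ (for some $d$) sending $\bz$ to $\ba_d$, placing $Q_\bz \in \K$ and forcing $\bz \subseteq Q_\bz$ from the $\psi_i$ clauses (which require $E_n(\bz, z_i)$). Axiom (II) then ensures $Q_{\bz'} \subseteq Q_\bz$ whenever $\bz' \subseteq Q_\bz$, and since $Q_\bz$ inherits its $E_m$'s from $N^+$, the restricted $f_\bz$ identifies $Q_{\bz'}$ with $\ccl^{M^d}(f_\bz(\bz'))$, which is $\lea$-contained in $M^d = \ccl^{M^d}(\ba_d)$ by intersection closure together with coherence. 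Transporting, $Q_{\bz'} \lea Q_\bz$. The family $\{Q_\bz\}$ is therefore $\lea$-directed (take $\bz_1 \frown \bz_2$) with union $N$ (each $y \in |N|$ lies in $Q_{\langle y \rangle}$), so the AEC chain axioms yield $N \in \K$ and $Q_\bz \lea N$. A final computation, combining the quoted fact with $\ccl^{M^d}(\ba_d) = M^d$, gives $Q_\bz = \ccl^N(\bz)$, identifying $N^+$ as the functorial expansion of $N$. The main obstacle is calibrating axiom (II) finely enough that the family $\{Q_\bz\}$ coheres into a $\lea$-directed system amenable to the chain axioms.

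It remains to verify the orderings coincide. For $M \lea N \Rightarrow M^+ \preceq_\cF^* N^+$: the quoted fact gives $\ccl^M = \ccl^N$ on $M$, so $M^+ \subseteq N^+$ as $\tau^+$-structures; formula preservation is then a standard induction, and the $*$-condition on each $\chi_d$ is automatic because $|\ccl^M(\bb)| \leq \LS(\K) < \LS(\K)^+ = \kappa$. Conversely, the $*$-condition on $\chi_d$ under $\preceq_\cF^*$ directly forces $\ccl^M(\bb) = \ccl^N(\bb)$ for each finite $\bb \in M$, so these closures are common $\lea$-substructures of $M$ and $N$ that assemble into a $\lea$-directed family covering $M$; by the standard AEC fact that directed unions of $\lea$-substructures of $N$ inside a member of $\K$ are themselves $\lea N$, we obtain $M \lea N$.
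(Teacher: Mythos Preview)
Your argument is correct, and it takes a genuinely different route from the paper's. The paper encodes the ordering \emph{directly inside the structural quantifier}: its main axiom ranges over pairs $(M_2,M_1)$ with $M_1 \lea M_2$ both in $\K_{\le \LS(\K)}$, and asserts
\[
\forall \bz\,\bz'\ \bigvee_{(M_2,M_1)\in S}\ \Qstruct_{M_2,M_1}\,x\,y\ E_{\ell(\bz\bz')}(x,\bz\bz')\ E_{\ell(\bz)}(y,\bz),
\]
together with a separate reflexivity schema $\forall z_0\ldots z_n\,E_n(z_k,z_0,\ldots,z_n)$. Thus for the paper the $\lea$-relation between $M_{\bz}$ and $M_{\bz\bz'}$ is read off instantly from the single-subset form of the quantifier, with no transitivity axiom needed. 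You instead code only \emph{single} isomorphism types, but in the expanded vocabulary $\tau^+$, and you use the tuple of $\psi_i$'s to pin the image of each $z_i$; the ordering between $Q_{\bz'}$ and $Q_{\bz}$ is then recovered indirectly via your monotonicity schema (II) together with the fact that the $E_m$'s are transported by the $\tau^+$-isomorphism $f_{\bz}$. What your route buys is that the identification $Q_{\bz}=\ccl^{Q_{\bz}}(\bz)$ (hence $Q_{\bz}=\ccl^N(\bz)$) is immediate from $f_{\bz}(\bz)=\ba_d$ and $M^d=\ccl^{M^d}(\ba_d)$, a point the paper leaves implicit. What the paper's route buys is a shorter axiomatization: it avoids both your schema (II) and the use of $\tau^+$-structures inside the quantifier, at the cost of quantifying over pairs rather than single types.
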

\begin{proof}
  We define $\K^+$ as follows:
  \begin{itemize}
    \item $E_n$ is an $(n + 1)$-ary relation symbol.
    \item $M^+ \in \K^+$ if and only if $M^+\rest \tau (\K) \in \K$ and $E_n^{M^+} = \{(a, \bb) \mid  \ell (\bb) = n, a \in \ccl^{M^+ \rest \tau (\K)}(\bb)\}$.
\end{itemize}

$T$ is the following theory:
\begin{enumerate}
	\item For each $n < \omega$ and each $k \le n$, include:
	  $$\forall z_0 \ldots z_{n} E_n (z_k, z_0, z_1, \ldots, z_n)$$
	Note that this sentence is not formally a $\forall\Qstruct$-sentence, but is equivalent to one by Lemma \ref{univ-gen}.
	\item If $\K$ is empty, include $\Qstruct_0 x x = x$ and $\Qstruct_1 x x \neq x$. Assume now that $\K$ is not empty. Let $D$ be a complete set of representatives of the elements of $\Dd (\K)$ (so $|D| = |\Dd (\K)|$), with the additional requirement that $(\ba, M) \in D$ implies $\|M\| \le \LS (\K)$ (this is possible by Remark \ref{small-repr}). Let $S$ be the set of pairs $(M_2, M_1)$ such that for some $(\ba \bb, M) \in D$, $M_2 = \ccl^M (\ba \bb)$, $M_1 = \ccl^M (\ba)$. Note that for such pairs $M_1 \lea M_2$ by coherence. Moreover $|S| \le |D| + \LS (\K)$. Include in $T$ all sentences of the form:
          
	$$\forall\bz,\bz' \bigvee_{(M_2, M_1) \in S} \Qstruct_{M_2, M_1} x y \left(E_{\ell (\bz \bz')} (x \bz \bz')\right) \left(E_{\ell (\bz)} (y \bz)\right)$$
\end{enumerate}
Let $\cF$ be the smallest fragment containing $T$.  We show that this works.
\begin{enumerate}
	\item Let $M^+ \in \K^+$ and let $M := M^+ \rest \tau (\K)$. We have:
	\begin{itemize}
		\item For all $\ba \in M$, $\ba \in \ccl^M(\ba)$; and
		\item For all $\ba_1, \ba_2\in M$,
                  
		$$M^+ \models \Qstruct_{\ccl^M(\ba_1\ba_2), \ccl^M(\ba_1)}x y \left(E_{\ell (\ba_1)} (x \ba_1)\right) \left(E_{\ell (\ba_1 \ba_2)} y \ba_1\ba_2\right)$$
	\end{itemize}
	So $M^+ \vDash T$.
	\item Suppose that $M^+ \leap{\K^+} N^+$.  Then $\ccl^M(\ba) = \ccl^N(\ba)$ are of size at most $\LS(\K)$, so $M^+\preceq^*_{\cF}N^+$ follows.
	
	\item Suppose that $M^+ \vDash T$.  We wish to express it as a directed colimit from $\K^+$.  For $\ba \in M^+$, let $M_\ba$ be the $\tau^+$-substructure of $M^+$ with universe $\{b \in M^+ \mid M^+\vDash E_{\ell (\ba)} (b \ba)\}$.  By definition of $T$ (and definition of a functorial expansion), $\ba \in M_{\ba}$, $M_\ba \in \K^+$, and $\ba \subseteq \bb$ implies that $M_\ba \leap{\K^+} M_\bb$.  Thus, 
	$$M^+ = \bigcup_{\ba \in M^+}M_\ba \in \K^+$$
	
	\item Suppose that $M^+, N^+ \models T$ and $M^+ \preceq_\cF^*N^+$.  This elementarity implies that, in the notation of the previous item, given $\ba \in M^+$, $M_\ba = N_\ba$. Now as before $M^+ = \bigcup_{\ba \in M^+} M_{\ba} = \bigcup_{\ba \in M^+} N_{\ba} \leap{\K^+} N^+$, as needed.
\end{enumerate}
\end{proof}
\begin{remark}\label{tarski-gen-rmk}
  It is instructive to see how the proof of Theorem \ref{aec-to-logic} plays out when $\K$ is a universal class. In this case, $E_n$ is definable, i.e.\ we can replace the formula $E_n (x, \by)$ by $\bigvee_{\rho} x = \rho (\by)$, where $\rho$ ranges over all terms in the vocabulary of $\K$. Thus there is no need to expand the vocabulary of $\K$. Similarly, if $(M_2, M_1)$ is a pair in $\K$ with $M_1 \subseteq M_2$ such that $M_2 = \ccl^{M_2} (\ba \bb)$, $M_1 = \ccl^{M_2} (\ba)$, then $\Qstruct_{M_2, M_1} x y (E_{\ell (\bz \bz')} (x, \bz \bz')) (E_{\ell (\bz)} (y \bz))$ is equivalent to $\bigwedge_{\phi \in \operatorname{tp}_{\text{qf}} (\bb_1 \bb_2; M_2)} \phi (\bz; \bz')$. Thus we indeed recover Tarski's characterization of universal classes as classes of models of a universal $\Ll_{\infty, \omega}$ theory.
\end{remark}

We have arrived to the promised characterization of AECs with intersections using structural logic:

\begin{cor}\label{charact-cor}
  Let $\K$ be an AEC. The following are equivalent:

  \begin{enumerate}
  \item $\K$ has intersections.
  \item There is a functorial expansion $\K^+$ of $\K$ with countably-many relation symbols of finite arity and a $\forall\Qstruct$-theory $T$ in $\Lstruct{\LS (\K)^+}_{\infty, \omega} (\tau (\K^+))$ such that $\K^+ = \Mod_{\cF} (T)$, where $\cF$ is the smallest fragment containing $T$.
  \end{enumerate}
\end{cor}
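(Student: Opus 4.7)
The plan is to derive the corollary as an immediate consequence of Theorems \ref{aec-to-logic} and \ref{logic-to-aec}, together with Remark \ref{funct-rmk}; the two nontrivial implications between the syntactic and semantic sides of the equivalence are exactly what those theorems provide, so the argument should amount to little more than unpacking definitions and matching cardinal parameters.

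For the implication (1) $\Rightarrow$ (2), I would apply Theorem \ref{aec-to-logic} directly to the given AEC $\K$. That theorem produces a functorial expansion $\K^+$ whose vocabulary adjoins a countable family $\{E_n : n < \omega\}$ of finitary relation symbols to $\tau(\K)$, together with a $\forall\Qstruct$-theory $T$ in $\Lstruct{\LS(\K)^+}_{(\LS(\K) + |\Dd(\K)|)^+, \omega}(\tau(\K^+))$ such that $\K^+ = \Mod_{\cF}(T)$ for the smallest fragment $\cF$ containing $T$. Since $\Lstruct{\LS(\K)^+}_{(\LS(\K) + |\Dd(\K)|)^+, \omega}$ is a sublogic of $\Lstruct{\LS(\K)^+}_{\infty, \omega}$, this theory already witnesses (2) in the weaker form stated.

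For the converse (2) $\Rightarrow$ (1), I would start from a functorial expansion $\K^+$ and a $\forall\Qstruct$-theory $T$ as in the statement, so that $\K^+ = \Mod_{\cF}(T)$. Applying Theorem \ref{logic-to-aec} to $T$ yields that $\Mod_{\cF}(T)$ is an AEC with intersections, and therefore so is $\K^+$. Finally, Remark \ref{funct-rmk}, which asserts that an AEC admits intersections precisely when some functorial expansion of it does, transfers the property from $\K^+$ back to $\K$ and closes the argument.

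I do not expect any genuine obstacle, precisely because the hard content has been absorbed into the two underlying theorems of this section. The only points to watch are cosmetic: one must be willing to replace the sharper bound $(\LS(\K) + |\Dd(\K)|)^+$ produced by Theorem \ref{aec-to-logic} by the coarser ``$\infty$'' appearing in the statement (harmless, and controllable via Lemma \ref{embed-dk} if one wanted a sharper formulation), and one must invoke Remark \ref{funct-rhk} in the correct direction, using that $\K^+$ in (2) is by hypothesis a functorial expansion of $\K$.
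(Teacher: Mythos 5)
Your proposal is correct and matches the paper's own proof exactly: the corollary is obtained by combining Theorem \ref{aec-to-logic} for the forward direction with Theorem \ref{logic-to-aec} and Remark \ref{funct-rmk} for the converse. The only blemish is the typo ``\verb|funct-rhk|'' for ``\verb|funct-rmk|'' in your last sentence.
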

\begin{proof}
  Combine Remark \ref{funct-rmk}, and Theorems \ref{logic-to-aec}, \ref{aec-to-logic}.
\end{proof}

As a particular case, we get that AECs with intersections and countable Löwenheim-Skolem-Tarski number are axiomatizable in $\Ll_{\infty, \omega} (Q)$:

\begin{cor}\label{l-q-cor}
  Let $\K$ be an AEC with intersections and $\LS (\K) = \aleph_0$. Then there is a functorial expansion $\K^+$ with countably-many relation symbols of finite arity and an $\Ll_{\aleph_1 + |\Dd (\K)|^+, \omega} (Q)$-theory $T$ such that $\K^+$ is the class of models of $T$.
\end{cor}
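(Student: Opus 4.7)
The plan is to apply Corollary \ref{charact-cor} (equivalently Theorem \ref{aec-to-logic}) in the case $\LS(\K) = \aleph_0$, and then translate the resulting $\forall\Qstruct$-theory in structural logic into a theory in $\Ll_{\infty,\omega}(Q)$ using the recipe from Remark \ref{basic-rmk}(\ref{lw1wq}).

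First I would invoke Theorem \ref{aec-to-logic} to obtain a functorial expansion $\K^+$ of $\K$ in the vocabulary $\tau^+ = \tau(\K) \cup \{E_n : n < \omega\}$ (so with countably-many new relation symbols of finite arity), together with a $\forall\Qstruct$-theory $T$ in $\Lstruct{\aleph_1}_{(\aleph_0 + |\Dd(\K)|)^+,\omega}(\tau^+)$ for which $\K^+ = \Mod_{\cF}(T)$, where $\cF$ is the smallest fragment containing $T$. Since $(\aleph_0 + |\Dd(\K)|)^+ \le \aleph_1 + |\Dd(\K)|^+$, the theory $T$ already lies in $\Lstruct{\aleph_1}_{\aleph_1 + |\Dd(\K)|^+,\omega}(\tau^+)$.

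Next I would apply the translation sketched in Remark \ref{basic-rmk}(\ref{lw1wq}) to each structural subformula of $T$. Any structural quantifier appearing in $T$ has the form $\Qstruct_{M,\bA} x \by \phi(x,\bz)\langle \psi_i(y_i,\bz) : i<n\rangle$ with $M$ a structure on an ordinal strictly less than $\aleph_1$; in particular $M$ is countable, so the relevant Scott sentence $\rho$ of $M_{\bA}$ can be chosen in $\Ll_{\aleph_1,\omega}$. Replacing the structural quantifier by the equivalent conjunction $\bigwedge_{i<n} \forall x (\psi_i(x,\bz) \to \phi(x,\bz)) \land \neg Q x\, \phi(x,\bz) \land \rho'$ produced in Remark \ref{basic-rmk}(\ref{lw1wq}) turns $T$ into a theory $T'$ in $\Ll_{\aleph_1 + |\Dd(\K)|^+,\omega}(Q)$ with exactly the same models. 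Finally I would cite Remark \ref{basic-rmk}(4) to conclude that the strong-substructure relation $\preceq_\cF^\ast$ agrees with the standard notion of $\Ll(Q)$-elementarity, so that $\K^+$ really is the class of models of $T'$.

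There is no genuine obstacle here, since all of the structural content has been absorbed into Theorem \ref{aec-to-logic} and the translation in Remark \ref{basic-rmk}(\ref{lw1wq}); the only delicate point is the cardinal-arithmetic bookkeeping showing that the translated theory still lives within $\Ll_{\aleph_1 + |\Dd(\K)|^+,\omega}(Q)$, which is immediate from the countability of each auxiliary structure $M$.
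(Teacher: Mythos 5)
Your proposal is correct and is essentially the paper's own proof, which consists of exactly the combination you describe: Theorem \ref{aec-to-logic} applied with $\LS(\K)=\aleph_0$, followed by the Scott-sentence translation of Remark \ref{basic-rmk}(\ref{lw1wq}), with the cardinal bookkeeping $(\aleph_0+|\Dd(\K)|)^+ = \aleph_1+|\Dd(\K)|^+$ being immediate. The one small caveat concerns your closing appeal to Remark \ref{basic-rmk}(4): as the paper notes immediately after the corollary, the ordering on $\K^+$ need \emph{not} be recovered as $\preceq_{\cF}$-elementarity for a fragment of $\Ll_{\infty,\omega}(Q)$ containing the translated theory (the Scott sentences interfere), but since the corollary only asserts that $\K^+$ is the class of models of $T$, this does not affect the correctness of your argument.
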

\begin{proof}
  Combine Theorem \ref{aec-to-logic} with Remark \ref{basic-rmk}(\ref{lw1wq}).
\end{proof}
\begin{remark}
  We may no longer have that the ordering on $\K^+$ is given by elementarity according to a fragment of $\Ll_{\infty, \omega} (Q)$ containing $T$. This is because we are using Scott sentences to code isomorphism types. Nevertheless, the axiomatization given by Corollary \ref{l-q-cor} uses only negative instances of $Q$ so indeed leads to an AEC with countable Löwenheim-Skolem number.
\end{remark}

We state a consequence on the descriptive set-theoretic complexity of $\K$. Recall that Baldwin and Larson call an AEC $\K$ \emph{analytically presented} \cite[Definitions 3.1 and 3.3]{baldwin-larson-iterated} if $\LS (\K) = \aleph_0$ and $\K_{\le \aleph_0}$; Shelah calls this notion $\PC_{\aleph_0}$ \cite[Definition I.1.4]{shelahaecbook}, and it is also called ``$\PC_{\delta}$ over $\Ll_{\omega_1, \omega}$'' in other places.  We call an AEC $\K$ \emph{Borel presented} (or just \emph{Borel})
 if $\LS (\K) = \aleph_0$ and $\K_{\le \aleph_0}$ is Borel (note that we think of each model as a real, and we also ask for the ordering relation on the countable models to be Borel). 

\begin{cor}\label{borel-cor}
  If $\K$ is an AEC with intersections, $\LS (\K) = \aleph_0$, and $\Ii (\K, \aleph_0) \le \aleph_0$, then $\K$ has a Borel functorial expansion (so in particular, it is analytically presented). 
\end{cor}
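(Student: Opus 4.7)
The plan is to invoke Theorem \ref{aec-to-logic} and verify that, under the hypothesis $\Ii(\K, \aleph_0) \le \aleph_0$, the resulting axiomatization transfers to a countable $\Ll_{\omega_1, \omega}$-axiomatization when restricted to countable models, at which point the Lopez--Escobar theorem delivers Borelness.

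First I would bound $|\Dd(\K)|$. Since every class in $\Dd(\K)$ is represented in a countable model (Remark \ref{small-repr}), and since any $\aleph_1$-sequence of countable models contains two isomorphic (hence mutually embeddable) members by pigeonhole on $\Ii(\K, \aleph_0) \le \aleph_0$, the cardinal $\mu$ of Lemma \ref{embed-dk} satisfies $\mu \le \aleph_1$. Combined with $\LS(\K)^+ = \aleph_1$, the lemma gives $|\Dd(\K)| < \aleph_1$. Applying Theorem \ref{aec-to-logic} then produces a functorial expansion $\K^+$ over a countable vocabulary $\tau^+$ axiomatized by a $\forall\Qstruct$-theory $T \subseteq \Lstruct{\aleph_1}_{\aleph_1, \omega}(\tau^+)$. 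Inspection of the construction shows that $T$ is countable (one sentence per length, plus the $E_n$-axioms), and each of its sentences is a countable disjunction of formulas $\Qstruct_{M_2, M_1}$ with $M_2, M_1 \in \K_{\le \aleph_0}$.

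Next I would transfer $T$ to countable models. On any countable $\tau^+$-structure the quantifier $Q$ (``there exist uncountably many'') is identically false, so Remark \ref{basic-rmk}(\ref{lw1wq}) rewrites each $\Qstruct_{M, \bA}$ appearing in $T$ as $\bigwedge_{i<n} \forall x(\psi_i \to \phi) \land \rho'$, where $\rho'$ is a relativized Scott sentence for $(M, \bA)$ --- an $\Ll_{\omega_1, \omega}$ sentence since $M$ is countable. Hence $\K^+_{\le \aleph_0}$ is the class of countable models of a countable $\Ll_{\omega_1, \omega}(\tau^+)$-theory $T'$, which is Borel in the Polish space of countable $\tau^+$-structures by Lopez--Escobar. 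For the ordering, $\leap{\K^+}$ agrees with $\preceq_{\cF}^*$ for $\cF$ the (countable) smallest fragment containing $T$; on countable models, $\preceq_{\cF}$ reduces to elementarity in a countable $\Ll_{\omega_1, \omega}$-fragment, which is Borel, while the extra clause of $\preceq_{\cF}^*$ is automatically triggered (because $|\phi(N_1, \bb)| < \aleph_1$ always holds for countable $N_1$) and amounts to a countable conjunction of equalities between Borel-definable subsets, again Borel.

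The one point requiring genuine care is the first step: one must combine the (conceptually simple but fiddly) pigeonhole argument with Lemma \ref{embed-dk} and then confirm that the theory $T$ produced by Theorem \ref{aec-to-logic} really is countable (rather than merely of some bounded small size). Once this is secured, the rewriting via Remark \ref{basic-rmk}(\ref{lw1wq}) and the triviality of $Q$ on countable models make the remaining descriptive set-theoretic verifications essentially routine, and the conclusion ``analytically presented'' is immediate from Borel presentedness.
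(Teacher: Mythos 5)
Your proposal is correct and follows essentially the same route as the paper's (very terse) proof: bound $|\Dd(\K)|$ by $\aleph_0$ via Lemma \ref{embed-dk}, apply Theorem \ref{aec-to-logic} to get a countable theory of countable-length sentences, and then use Scott sentences (via Remark \ref{basic-rmk}(\ref{lw1wq})) to land in a countable $\Ll_{\omega_1,\omega}$-fragment where Borelness of both the model class and the ordering follows. Your write-up simply makes explicit the details the paper compresses into ``the result directly follows (use Scott sentences).''
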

\begin{proof}
  Let $\K^+$ be as described by Theorem \ref{aec-to-logic}. By Lemma \ref{embed-dk}, $|\Dd (\K)| \le \aleph_0$. Since $\K^+$ is a functorial expansion, $|\Dd (\K^+)| \le \aleph_0$. This means that the sentences given by Theorem \ref{aec-to-logic} all have countable length, and the result directly follows (use Scott sentences).
\end{proof}

The following stronger property is interesting: let us call an AEC $\K$ \emph{model-complete} if for any $M, N \in \K$, $M \lea N$ if and only if $M \subseteq N$. When does an AEC have a model-complete functorial expansion (in a finitary vocabulary)? This cannot happen for all AECs with intersections: consider the following simple example of Kueker \cite[Example 2.10]{kueker2008}: The vocabulary has a unary predicate $P$, and $K$ is the class of $M$ such that $P^M$ is countably infinite and $|M| \backslash P^M$ is infinite. Order the class by $M \lea N$ if and only if $M \subseteq N$ and $P^M = P^N$. This has intersections but no model-complete functorial expansions. More generally, the AECs which have model-complete functorial expansions are exactly those with finite character.  Finite character is a key property of finitary AECs introduced by Hyttinen and Kes\"{a}l\"{a} \cite{finitary-aec} and isolated by Kueker \cite[Definition 3.1]{kueker2008}. For this result, we assume the reader is familiar with Galois (orbital) types and the definition of finite character:

\begin{thm}\label{fcmc-thm}
  Let $\K$ be an AEC. The following are equivalent:

  \begin{enumerate}
  \item $\K$ has finite character.
  \item $\K$ has a model-complete functorial expansion in a finitary vocabulary.
  \end{enumerate}
\end{thm}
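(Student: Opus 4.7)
The plan is to design a functorial expansion whose new vocabulary explicitly names Galois types of finite tuples over $\emptyset$, and to argue that this expansion is model-complete precisely when $\K$ has finite character.

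For $(1) \Rightarrow (2)$, I would set $\tau^+ := \tau(\K) \cup \{R_p : p \in S\}$, where $S$ is the set of all Galois types of finite tuples over $\emptyset$ and $R_p$ is a relation symbol of arity equal to the length of (any realization of) $p$. Each $M \in \K$ is expanded to $M^+$ by putting $R_p^{M^+}(\ba)$ iff $\ba$ realizes $p$ in $M$. Since Galois types are isomorphism-invariant, this is a functorial expansion in a finitary vocabulary. Model-completeness then reduces directly to finite character: if $M^+ \subseteq N^+$ as $\tau^+$-structures, then $M \subseteq N$ and every finite tuple from $M$ realizes the same Galois type in $M$ and in $N$, so finite character yields $M \lea N$, i.e., $M^+ \leap{\K^+} N^+$. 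The converse is immediate, since a $\K$-strong inclusion preserves Galois types over $\emptyset$.

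For $(2) \Rightarrow (1)$, I would start with a given model-complete functorial expansion $\K^+$ in finitary $\tau^+$, take $M \subseteq N$ in $\K$ with all finite-tuple Galois types over $\emptyset$ matching, and aim to show $M^+ \subseteq N^+$ as $\tau^+$-structures (this suffices, since model-completeness will then provide $M^+ \leap{\K^+} N^+$, that is, $M \lea N$). Fixing $R \in \tau^+ \setminus \tau(\K)$ of arity $n$ and $\ba \in M^n$, equality of Galois types yields a zig-zag of $\K$-embeddings between copies of $M$ and $N$ all agreeing on $\ba$; functoriality lifts each leg to a $\K^+$-embedding, which preserves the relation $R$, so $R^{M^+}(\ba) \iff R^{N^+}(\ba)$.

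The main obstacle is checking two definitional subtleties. First, in $(1) \Rightarrow (2)$ one must ensure that $S$ is an honest set; this follows because every Galois type of a finite tuple over $\emptyset$ is realized in some model of size $\le \LS(\K)$, bounding $|S|$ by $2^{\LS(\K)}$. Second, in $(2) \Rightarrow (1)$, equality of Galois types is by definition the transitive closure of the single-amalgam relation, so the functoriality argument must proceed along a zig-zag rather than in one step; this is not a serious difficulty since each step individually preserves $\tau^+$-relations.
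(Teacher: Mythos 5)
Your proof is correct and follows essentially the same route as the paper: the forward direction is exactly the $(<\aleph_0)$-Galois Morleyization that the paper invokes (a relation symbol for each Galois type of a finite tuple over the empty set), and the converse uses, as the paper does, that equality of Galois types forces agreement of the quantifier-free $\tau(\K^+)$-structure, so that model-completeness yields $M \lea N$. The only detail worth adding is that in $(2)\Rightarrow(1)$ the given expansion could a priori contain new function symbols, which your zig-zag argument still handles by applying it to the extended tuple $\ba F^{M^+}(\ba)$ to see that $|M|$ is closed under $F^{N^+}$ with the right values.
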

\begin{proof}
  If $\K$ has finite character, let $\K^+$ be the $(<\aleph_0)$-Galois Morleyization of $\K$ \cite[Definition 3.3]{sv-infinitary-stability-afml}: it is obtained by adding a relation symbol for each Galois type of a finite sequence over the empty set. Then it is easy to see that $\K^+$ is model-complete. Conversely, if $\K$ has a model-complete functorial expansion $\K^+$ then since Galois types preserve $\tau (\K^+)$-quantifier-free types, $\K$ must have finite character.
\end{proof}


\section{Equivalence vs. Axiomatization}\label{eva-sec}
    
The previous section showed how to axiomatize an AEC with intersections after functorial expansion.  This is partial progress towards a more general question about the axiomatizability of AECs.

\begin{question}\label{aeclog-quest}
Is there a \emph{natural} logic $\cL^{AEC}$ along with natural notions of fragments $\cF \subseteq \cL^{AEC}$ and elementarity $\preceq_\cF^{AEC}$ that is ``the right logic for AECs'' in the sense that the logic is both:
\begin{enumerate}
	\item \label{one}\underline{Limited:} 
          given a theory $T \subseteq \cL^{AEC}$ and a fragment $\cF \subseteq \cL^{AEC}$ containing $T$, the class
	$$\left( \Mod (T), \preceq_{\cF}^{AEC}\right)$$
	is an AEC (preferably with a bound on the L\"{o}wenheim-Skolem number easily computable from $\cF$ and $T$); and
	\item \underline{Complete:} given an AEC $\left(\K, \lea\right)$, there is a theory $T_\K \subseteq \cL^{AEC}$ and a fragment $\cF_\K \subseteq \cL^{AEC}$ containing $T_\K$ such that
	$$\left(\K, \lea\right) = \left( \Mod (T_\K), \preceq_{\cF_\K}^{AEC}\right)$$
\end{enumerate}
\end{question}

The adjective ``natural'' is intended to forbid direct reference to AECs in defining the logic. 
For instance, the following should be forbidden:  Given a language $\tau$, let $\bL(\tau)$ consist of a sentence $\phi_{\K}$ for each AEC $\K$ in the language $\tau$.  Then satisfaction is defined so $M \vDash \phi_{\K}$ if and only if $M \in \K$.  Clearly, by definition, $\K = \Mod (\phi_\K)$.  This could be similarly extended to a notion of elementarity that would capture $\lea$.  However, this logic is defined by explicit reference to AECs, so seems unnatural and, moreover, unhelpful to adding to the understanding of AECs. Shelah \cite[Definition IV.1.9]{shelahaecbook} defines an extension of $\bL_{\infty, \LS(\K)^+}$ that achieves completeness without a functorial expansion, but similarly fails to be limited (item (\ref{one}) above).  Baldwin and the first author \cite[Theorem 3.9]{baldwin-boney} show that every AEC $\K$ admits a functorial expansion to an $\bL_{\infty, \LS(\K)^+}$ axiomatizable class.  However, not every $\bL_{\infty, \LS(\K)^+}$ axiomatizable class is an AEC (e.g.\ complete metric spaces), so this fails the limited condition.  Between these two examples, it is not known if every AEC is axiomatizable by a sentence of $\bL_{\infty, \infty}$.

A related question has been posed by Makkai and Rosick\'{y}.  Recall a functor between two categories $F:\cC \to \cD$ is an \emph{equivalence} if it is full, faithful, and essentially surjective.

\begin{question}[Makkai, Rosick\'{y}]\label{mr-quest}
Is every AEC equivalent to the models of a sentence of $\Ll_{\infty, \omega}$?
\end{question}

As evidence for this, recall that Makkai and Par\'{e} \cite[Proposition 3.2.3 and Theorem 4.3.2]{makkai-pare} have shown that every accessible category (one that is closed under sufficiently directed colimits and generated by a set of `small' models) is equivalent to the models of an $\bL_{\infty, \infty}$ axiomatizable class; such classes are clearly accessible.  Further discussion of these issues can be found in  \cite[\S4]{multipres-v4-toappear}, \cite{beke-rosicky}, or \cite{lieberman-categ}. After the initial submission of the present paper, Simon Henry \cite{henry-aec-uncountable-v1} gave a topos-theoretic proof showing that the AEC of all uncountable sets (ordered by subset) is a counterexample to Question \ref{mr-quest}.

Although Questions \ref{aeclog-quest} and \ref{mr-quest} seem very similar, the following example illustrates the key difference: an equivalence of categories is {\bf not} required to preserve the underlying structure of sets.  This means that entire models can be condensed to single points.  As an example, consider again quasiminimal classes \cite[Section 5]{quasimin}. First, note that most quasiminimal classes are not finitary: Kirby \cite[Section 2.8]{kirby-note-axiom} proves this for Zilber's pseudominimal fields, but this can also easily be seen for the ``toy example'' of equivalence relations with countably infinite classes, ordered by ``equivalence classes do not grow''.  By Theorem \ref{fcmc-thm} above, such classes do not have functorial expansions that are model complete.  On the other hand, any $\bL_{\infty, \omega}$ axiomatizable class has such a functorial expansion.  Thus we get the following corollary.

\begin{cor}
Zilber's pseudoexponential fields do not have a functorial expansion that is axiomatizable by $\bL_{\infty, \omega}$.
\end{cor}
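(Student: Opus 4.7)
The plan is a brief proof by contradiction combining the three ingredients assembled just before the corollary: Theorem \ref{fcmc-thm}, Kirby's theorem \cite[Section 2.8]{kirby-note-axiom} that Zilber's pseudoexponential fields fail finite character, and the Morleyization fact (recalled in the paragraph above the corollary) that any $\bL_{\infty, \omega}$-axiomatizable class has a model-complete functorial expansion in a finitary vocabulary.

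Let $\K$ denote the AEC of Zilber's pseudoexponential fields, and suppose toward contradiction that some functorial expansion $\K^+$ of $\K$ is axiomatizable by $\bL_{\infty, \omega}$. First, applying the Morleyization fact to $\K^+$ produces a model-complete functorial expansion $\K^{++}$ of $\K^+$ in a finitary vocabulary; concretely one can adjoin, for each $\bL_{\infty, \omega}(\tau (\K^+))$-formula $\psi(\bx)$ with $\bx$ a finite tuple, a fresh relation symbol $R_\psi$ of arity $\ell(\bx)$ interpreted as the solution set of $\psi$. Second, I will observe that functorial expansions compose: through the chain of reducts $\tau(\K^{++}) \supseteq \tau(\K^+) \supseteq \tau(\K)$, the class $\K^{++}$ becomes a functorial expansion of $\K$ directly, since each clause of Definition \ref{funct-def} passes through two successive reducts, and the induced ordering $\leap{\K^{++}}$ is the same predicate whether pulled back through $\K^+$ or straight from $\K$. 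Hence $\K$ admits a model-complete functorial expansion in a finitary vocabulary.

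By Theorem \ref{fcmc-thm}, $\K$ then has finite character, contradicting Kirby's theorem and completing the proof. The main obstacle — really the only point requiring any care — is the compositional check: one must verify that model-completeness of $\K^{++}$ viewed over $\K^+$ is literally the same condition as model-completeness viewed over $\K$, which follows because the two definitions of $\leap{\K^{++}}$ coincide once the reducts are iterated. This is a routine unpacking of Definition \ref{funct-def}, after which the corollary falls out immediately from the cited results.
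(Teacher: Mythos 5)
Your proof is correct and follows essentially the same route as the paper: Kirby's failure of finite character, Theorem \ref{fcmc-thm}, and the observation that an $\bL_{\infty,\omega}$-axiomatizable class has a model-complete functorial expansion via Morleyization, with the composition of functorial expansions (which the paper leaves implicit) spelled out explicitly. The only cosmetic point is that your Morleyization should adjoin relation symbols only for the formulas in a set-sized fragment containing the axiomatizing theory, rather than for all $\bL_{\infty,\omega}(\tau(\K^+))$-formulas, to keep the vocabulary a set.
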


However, quasiminimal classes are \emph{finitely accessible}.  This is a categorical formulation of the fact that they are closed under directed colimits and every structure is the directed colimit of the finite dimensional models.  Thus, by Makkai and Par\'{e}, the class is equivalent as a category to a $\bL_{\infty, \omega}$ axiomatized class.  Indeed, while Makkai and Par\'{e} work through sketches, there is a straightforward description of the equivalent class.

Fix $\K$ to be a quasiminimal class and, for each $n<\omega$, let $M_n \in \K$ be an $n$-dimensional model.  Then define $T_\K \in \bL_{\infty, \omega}(\tau_\K)$ as follows:
\begin{itemize}
	\item $\tau_\K$ has a sort $S_n$ for each $n<\omega$ and a function $\bar{f}:S_m \to S_n$ for each $\K$-embedding $f:M_n \to M_m$.
	\item $T_\K$ consists of the following:
	\begin{itemize}
		\item if $f:M_n \to M_m$ and $g:M_m\to M_k$ are $\K$-embeddings, then include
		$$\forall x \in S_k \left(\overline{g \circ f}(x) = \bar{f}\left(\bar{g}(x)\right)\right)$$
		\item for each $n, m<\omega$, include
		$$\forall x \in S_n, y \in S_m \exists z \in S_{n+m} \bigvee_{f:M_n \to M_{n+m}, g:M_m\to M_{n+m}} (\bar{f}(z) = x \wedge \bar{g}(z)=y)$$
	\end{itemize}
\end{itemize}

The first sentence says that a model is essentially a functor from the opposite category of the finite dimensional models to the category of sets and the second sentence gives the directedness of the resulting system.  Then the equivalence $F: \K \to \Mod (T_\K)$ is given by, for $M \in \K$
\begin{enumerate}
	\item $S_n^{FM} := \{x:M_n \to M \mid x \text{ is a }\K \text{-embedding}\}$
	\item $\bar{f}^{FM}$ takes $x:M_m \to M$ to $x\circ f: M_n \to M$
\end{enumerate}
Thus, although Zilber's pseudoexponential fields are equivalent to an $\bL_{\infty, \omega}$ axiomatizable class, the equivalence turns every finite dimensional substructure into a point.

We close with an example of an AEC that is well-behaved, yet the authors know of no limited logic (in the sense of Question \ref{aeclog-quest}) that captures it, either up to equivalence of categories or axiomatization of a functorial expansion.

\begin{example}
Let $T$ be a superstable first-order theory and let $\lambda$ be an infinite cardinal. Set $\K^\lambda$ to be the $\lambda$-saturated models of $T$ ordered by elementary substructure.  Since $T$ is superstable, the class is closed under increasing union and, thus, in an AEC with L\"{o}wenheim-Skolem number at most $2^{|T|} + \lambda$.  Since every model of $T$ can be extended to a $\lambda$-saturated one, this class inherits many desirable AEC properties from $\Mod (T)$: amalgamation, tameness, etc. (if $\lambda = \aleph_0$, the class is even closed under regular ultraproducts).
\end{example}

\bibliographystyle{amsalpha}
\bibliography{logic-intersection}

\end{document}